\numberwithin{equation}{section}
\theoremstyle{plain}
\newtheorem{theorem}{Theorem}[section]
\newtheorem{proposition}[theorem]{Proposition}
\newtheorem{corollary}[theorem]{Corollary}
\newtheorem{definition}[theorem]{Definition}
\theoremstyle{definition}
\newtheorem{remark}[theorem]{Remark}
\newtheorem{notation}[theorem]{Notation}
\newtheorem{example}[theorem]{Example}
\DeclareMathOperator{\rank}{rk}
\DeclareMathOperator{\jac}{J}
\DeclareMathOperator{\Sect}{Sect}
\DeclareMathOperator{\im}{im}
\DeclareMathOperator{\supp}{supp}
\DeclareMathOperator{\sign}{sign}
\DeclareMathOperator{\sspan}{span}
\DeclareMathOperator{\triv}{triv}
\DeclareMathOperator{\CC}{\mathbb{C}}
\DeclareMathOperator{\FF}{\mathbb{F}}
\DeclareMathOperator{\QQ}{\mathbb{Q}}
\DeclareMathOperator{\RR}{\mathbb{R}}
\title{On the Symmetric Tensors Completion Problem}
\author{Shokhzod Kurokboev}
\date{}
\begin{document}
\maketitle

\begin{abstract}
The aim of this elaborate is presenting the classical symmetric tensors completion problem to an audience of graduate students. As main studying tool, we will introduce the theory of hypergraph rigidity which naturally mirrors the problem itself. This has already appeared in literature: in 2023, Cruicksand, Mohammadi, Nixon, and Tanigawa introduced organically the theory of hypergraph rigidity, as a generalization of the usual Euclidean rigidity, and showed how it can be used to mirror several different theoretical problems, including tensor completion.
 \end{abstract}
 
{\hypersetup{linkcolor=black}
{\tableofcontents}}

%\vspace{50pt}

\section{Introduction}

This note presents the symmetric tensor completion problem which has application in many areas including phylogenetics, quantum information and signal processing. In the first section, we introduce the symmetric tensor completion problem after recalling the basics on tensors. In the next section, we give the basics of Euclidean rigidity and generalize it to hypergraph rigidity. This section also introduces the infinitesimal $g$-rigidity and provides its relation with certain properties of secant varieties such that identifiability and defectivity. The last section gives sufficient conditions for local and global $g$-rigidity. We mainly discuss to check for $t$-tangentially weak defectiveness which is one of the sufficient conditions for being globally $g$-rigid in an algebraic way.  

\section{Symmetric Tensors}
\subsection{Basics on Tensors}

\begin{definition}[Tensor]
A $n$-th rank tensor in $m$-dimensional space is a mathematical object that has $n$ indices and $m$ components and obeys certain transformation rules. Each index of a tensor ranges over the number of dimensions of space. 
\end{definition}

\begin{remark}[Representation of Tensors]
\end{remark}

Let $V$ be a vector space of dimension $n$ over $\mathbb{C}$ and $V^{\otimes k}$ be the $k$-fold tensor product of $V$. Let fix a basis of $V$, and assume that each $T\in V^{\otimes k}$ is represented by a $k$-dimensional array over $\mathbb{C}$.

\begin{definition} 
A tensor $T\in V^{\otimes k}$ is called symmetric if for any permutation $\sigma$ on $[k]$ we have $T_{i_{1},...,i_{k}}=T_{\sigma(i_{1}),...,\sigma (i_{k})}$. The set of symmetric tensors in $V^{\otimes k}$ is denoted by $S^k(V)$.
\end{definition}
It is well-known fact that any symmetric tensor can be written as follows:
\begin{center}
 $T=\sum_{i=1}^{d} x^{\otimes k}_{i}=\sum_{i=1}^{d} x_{i} \otimes x_{i}\otimes...\otimes x_{i} $    (1)   
\end{center}
for some vectors $x_{1},...,x_{d}\in V$
\begin{definition}
 The smallest possible $d$ for which the symmetric tensor $T$ can be written in the form of equation (1) is called the symmetric rank of $T$. 
\end{definition}

\subsection{The Symmetric Tensors Completion Problem}

Let $T$ be a partially-filled tensor of order $k$ and size $n$ and in the symmetric tensor completion problem we are asked to fill the remaining entries to obtain a symmetric tensor of symmetric rank at most $d$. Let denote the set of multisets of $k$ elements of a finite set $X$ with $\binom{[n]}{k}$. Hence, we can encode the underlying combinatorics of each instance of the completion problem using a $k$-uniform hypergraph $([n],E)$. 

We can also write equation (1) by using an algebraic relation among points in $\mathbb{C}^d$. For this, let $p$ be the $d\times n$ matrix with the $i$-th row equal to $x_{i}$. The $e$-th entry of equation (1)

\section{$\mathbf{g}$-Rigidity of Hypergraphs}

\subsection{Euclidean rigidity}
Let $G=(V,E)$ be the graph whose vertex set is $V$ and whose edge set is $E$.
\begin{definition} (Framework, Generic Framework)
 A framework $(G,p)$ is an ordered pair containing of a graph $G$ and a point-configuration $p:V\longrightarrow \mathbb{R}^d$. In particular, a framework is told generic if there is no algebraic relation over $\QQ$, among the coordinates of the points in $p(V) \subseteq \RR^d$
\end{definition}

\begin{definition}(Rigid and Flexible Frameworks)
 A framework $(G,p)$ is rigid if every edge-length preserving continuous motion of the vertices arises from isometries of $\mathbb{R}^d$. Otherwise, it is called flexible. 
\end{definition}

\begin{example}
  Complete graphs are always rigid. This is because we know that the framework $(G,q)$ can be obtained from the framework $(G,p)$ by an isometry of $\mathbb{R}^d$ if and only if frameworks $(G,p)$ and $(G,q)$ are congruent which means that $||p(v_{i})-p(v_{j})||=||q(v_{i})-q(v_{j})||$ for any $v_{i},v_{j}\in V$. In the complete graph, any edge-length preserving continuous motion of the vertices is already congruent to the previous one. 
  
\end{example}
\begin{example}
 Any disconnected graph is not rigid (in any dimension). The graph $G$ is disconnected, hence there exist two vertices $v_{1}$ and $v_{2}$ such that no path between them. It is clear that, we can continuously move vertices in a such way that, edge-lengths are preserved but the length between the vertices $v_{1}$ and $v_{2}$ is not. 
\end{example}

\begin{example}(Rigidity depends on the dimension).
Let $G$ be a square with one diagonal, more formally $V=(v_{1},v_{2},v_{3},v_{4})$ and $E=(v_{1}v_{2},v_{2}v_{3},v_{3}v_{4},v_{4}v_{1},v_{1}v_{3})$. It is rigid in $\mathbb{R}^2$, because any edge-length preserving continuous motion of the vertices is congruent to the previous one. However, it is not rigid in $\mathbb{R}^3$. Because we can move the triangle $v_{1}v_{2}v_{3}$ by fixing the triangle $v_{1}v_{3}v_{4}$ in $\mathbb{R}^3$. It is edge-length preserved motion, but the length between the vertices $v_{2}$ and $v_{4}$ is not preserved.

\end{example}

\begin{example}(Rigidity depends on the point-configuration). Let $G_{1}$ be a square in $\mathbb{R}^2$ with $V=(v_{1},v_{2},v_{3},v_{4})$ and $E=(v_{1}v_{2},v_{2}v_{3},v_{3}v_{4},v_{4}v_{1})$. In this example, $p_{1}:v_{1}\longmapsto(0,0), v_{2}\longmapsto(0,1), v_{3}\longmapsto(1,1), v_{4}\longmapsto(1,0)$. The graph $G_{1}$ is not rigid, because we can get an unit rhombus by moving the vertices continuously. It is an edge-length preserving motion, but the length between the vertices $v_{1}$ and $v_{3}$ is not preserved. Let $G_{2}$ be a triangle in $\mathbb{R}^2$, which $p_{2}:v_{1}\longmapsto (0,0), v_{2}\longmapsto (1,1)$, $v_{3}\longmapsto (1,1)$, $v_{4}\longmapsto (1,0)$. $G_{2}$ is a complete graph and hence it is rigid in $\mathbb{R}^2$.
    
\end{example}

\begin{definition}(Measurement Map)
 The measurement map $f_{G}:(\mathbb{R}^d)^{V}\longrightarrow\mathbb{R}^{E}$ is defined by putting 
$$f_{G}(p)=(...,|| p(i)-p(j)||^{2},...)_{ij\in E},$$
where $|| \bullet ||$ denotes the usual Euclidean metric.
\end{definition}

\begin{definition}[Infinitesimal Motion, Trivial Motion, and Infinitesimally Rigid Framework]
An infinitesimal motion of the framework $(G,p)$ is a map $u \colon V\to \mathbb{R}^d$ such that $(p(i)-p(j))(u(i)-u(j))=0$ for all $ij\in E$.

An infinitesimal motion is termed trivial if $u=Ap(i)+b$ for all $i\in V$, where A is a skew-symmetric matrix and $b$ is a vector in $\mathbb{R}^d$.

A framework $(G,p)$ is infinitesimally rigid if every infinitesimal motion of $(G,p)$ is trivial. 
\end{definition}

\begin{example}
 Let $G=(\{1,2,3\}, \{12,13,23\})$ is a complete graph (triangle) in $\mathbb{R}^2$. Then infinitesimal motion of $(G,p)$, $u:V\longrightarrow\mathbb{R}^2$ satisfies
 \begin{center}
  $$\begin{cases}
   (p_{1}-p_{2})(u_{1}-u_{2})=0\\
   (p_{1}-p_{3})(u_{1}-u_{3})=0\\
   (p_{2}-p_{3})(u_{2}-u_{3})=0
  \end{cases} \hbox{or equivalently,} \quad \begin{cases}
  (p_{1}-p_{2})u_{1}+(p_{2}-p_{1})u_{2}+0u_{3}=0\\
  (p_{1}-p_{3})u_{1}+0u_{2}+(p_{3}-p_{1})u_{3}=0\\
  0u_{1}+(p_{2}-p_{3})u_{2}+(p_{3}-p_{2})u_{3}=0
  \end{cases}$$
 \end{center}
 Hence, its rigidity matrix equals:
 \begin{center}
 $$\begin{pmatrix}
  p_{1}-p_{2}&p_{2}-p_{1}&0\\
  p_{1}-p_{3}&0&p_{3}-p_{1}\\
  0&p_{2}-p_{3}&p_{3}-p_{2}
 \end{pmatrix}$$
  \end{center}
\end{example}

\begin{definition}[Globally and Locally Rigid Framework]
    Let $(G,p)$ be a framework in $\RR^d$ and let $f$ be the corresponding measurement map. Then $(G,p)$ is said to be globally rigid if every $q\in f^{-1}_{G}(f_{G}(p))$ satisfies $t\cdot q=p$ for an isometry $t$ of $\mathbb{R}^d$.
   
\end{definition}

\begin{example}
 Complete graphs are always globally rigid.
\end{example}

\begin{example}

Let $G$ be a square in $\mathbb{R}^2$ with $V=(v_{1},v_{2},v_{3},v_{4})$ and $E=(v_{1}v_{2},v_{2}v_{3},v_{3}v_{4},v_{4}v_{1})$. In this example, $p_{1}:v_{1}\longmapsto(0,0), v_{2}\longmapsto(0,1), v_{3}\longmapsto(1,1), v_{4}\longmapsto(1,0)$. It is rigid in $\mathbb{R}^2$ because of the same reason we have seen in Example 3.5. But, it is not globally rigid in $\mathbb{R}^2$. Indeed, if we take a point configuration $q$ which sends $v_{1}\longmapsto(1,1), v_{2}\longmapsto(0,1), v_{3}\longmapsto(1,1), v_{4}\longmapsto(1,0)$, then $q\in f^{-1}_{G}(f_{G}(p))$. On the other hand, $(G,p)$ and $(G,q)$ are not congruent.

\end{example}

\subsection{A Generalization of Euclidean Rigidity}

In rigidity theory all notions can be extended by replacing the Euclidean distance function with a general smooth function. In this case, we will use hypergraphs instead of graphs to capture the underlying combinatorics. 

\begin{notation}
In the following, $\mathbb{F}$ is either the field of real numbers $\mathbb{R}$ or the field of complex numbers $\mathbb{C}$.
\end{notation}

\begin{definition}[Hypergraph and Hyperframework]
 A hypergraph is an ordered pair of $(V,E)$, where $V$ is a set of vertices and $E$ is a set of pairs of subsets of $V$. A point configuration is a map $p:V\longrightarrow\mathbb{F}^d$. An ordered pair $(G,p)$, containing of a hypergraph $G$ and point-configuration $p$ is called hyperframework in $\mathbb{F}^d$.   
\end{definition}

\begin{example}
 Let $d=2$ and $G=(V,E)$ is a hypergraph, where $V=\{a,b,c,d\}$ and $E=\{aaa,aab,aac,abc\}$. A point-configuration $p:V\longrightarrow \mathbb{F}^2$ sends the vertices $a\longmapsto(x_{a_{1}},x_{a_{2}})$,
 $b\longmapsto(x_{b_{1}},x_{b_{2}})$,
 $c\longmapsto(x_{c_{1}},x_{c_{2}})$ and $d\longmapsto(x_{d_{1}},x_{d_{2}})$.
\end{example}

We will now adapt the definitions of the tools we introduced for the usual rigidity to the generalized context of hypergraph rigidity.

\begin{definition}[$g$-measurement map]
 Let $G = (V,E)$ be a $k$-uniform hypergraph, let $(G,p)$ be a corresponding hyperframework, and let $g \colon (\FF^d)^k \to \FF$ be a polynomial map. The $g$-measurement map of $G$ is defined as the polynomial map:
 \begin{align*}
     f_{g,G} \colon (\mathbb{F}^d)^V &\to \mathbb{F}^E \\
     \hfill p(V) &\mapsto (g(p(v_1), \dots, p(v_k)))_{\{v_1, \dots, v_k\} \in E} 
 \end{align*}

\end{definition}

\begin{remark}
\begin{itemize}
    \item With a minor abuse of notation, we will write $f_{g,G}(p):=(g(p(v_1),p(v_2),...,p(v_k)):e=\{v_1,v_2,...,v_k\}\in E)$.
    \item The good definition of the $g$-measurement map $f_{g,G}$ requires the polynomial map $g$ to be either symmetric or anti-symmetric with respect to the ordering of the points.
    \item For the classical rigidity theory, $g$ is the Euclidean distance.
\end{itemize}

\end{remark}
 
Suppose the general affine group $\hbox{Aff} (d,\mathbb{F})$ acts on $\mathbb{F}^d$ by $\gamma \cdot x=Ax+t$ for $x\in \mathbb{F}^d$ and each pair $\gamma=(A,t)$, where $A\in$GL$(d,\mathbb{F})$ and $t\in \mathbb{F}^d$. Similarly, the action of $\hbox{Aff} (d,\mathbb{F})$ on $(\mathbb{F}^d)^V$ is defined on the following way: $(\gamma\cdot p)(v)=Ap(v)+t$
for any $\gamma=(A,t)\in \hbox{Aff} (d,\mathbb{F})$ and $p\in (\mathbb{F}^d)^V$.  Then the induced action on a polynomial map $g:(\mathbb{F}^d)^k\longrightarrow\mathbb{F}$ is given by $\gamma \cdot g(x_1,...,x_k)=g(\gamma^{-1}\cdot x_1,...,\gamma^{-1}\cdot x_k)$ for $x_1,...,x_k\in\mathbb{F}^d$ and $\gamma\in \hbox{Aff} (d,\mathbb{F})$. $\gamma$ is called stabilizes $g$ if $g$ is invariant respect to the action $\gamma$ and the set of all pairs $(A,t)$ which stabilize $g$ called the stabilizer $\Gamma_g$ of $g$.

\begin{definition}(Globally and Locally $g$-rigid Hyper-framework)
The hyper-framework $(G,p)$ is called globally $g$-rigid if for any $q\in f^{-1}_{g,G}(f_{g,G}(p))$ there exists $\gamma\in \Gamma_{g}$ such that $q=\gamma \cdot p$. The hyper-framework $(G,p)$ is called locally $g$-rigid if there exists an open neighbourhood $N$ of $p$ in $(\mathbb{F}^d)^V$ respect to the Euclidean topology such that for any
$q\in f^{-1}_{g,G}(f_{g,G}(p))\cap N$ there is  $\gamma\in \Gamma_{g}$ such that $q=\gamma \cdot p$.
\end{definition}
  
Let $(G,p)$ be a $k$-uniform hyper-framework and let $g \colon (\FF^d)^k \to \FF$ be a polynomial map.. The stabilizer $\Gamma_{g}$ of $g$ is a Lie group, let denote Lie algebra of $\Gamma_{g}$ by $g$. $\Gamma_{g}$ acts on $(\mathbb{F}^d)^V$ on the following way: $(\gamma \cdot p)(v)=\gamma \cdot (p(v))$ for $v\in V$ and it induces a map from $g\times (\mathbb{F}^d)^V \longrightarrow (\mathbb{F}^d)^V$, sends the tuple $(\dot{\gamma},p)$ to $\dot{\gamma}\cdot p$ whose image lies in the right kernel of $Jf_{g,G}(p)$. Then $\dot{p}\in {\mathbb{F}^d}^V$ which is defined by $\dot{p}=\dot{\gamma}\cdot p$ is called a trivial infinitesimal $g$-motion of the hyper-framework $(G,p)$. The space of trivial infinitesimal motions of $(G,p)$ is defined as follows: triv$_{g}(p)=\{\dot{\gamma}\cdot p : \dot{\gamma}\in g\}$.

\begin{definition}
 Let $(G,p)$ be a $k$ uniform hyper-framework and let $g \colon (\FF^d)^k \to \FF$ be a polynomial map. The hyperframework $(G,p) $ is infinitesimally $g$-rigid if the dimension of $\ker Jf_{g,G}(p)$ is equal to the dimension of the space of trivial infinitesimal $g$-motions. 
\end{definition}

\begin{proposition}
 Let $(G,p)$ be a $k$ uniform hyper-framework and let $g \colon (\FF^d)^k \to \FF$ be a polynomial map.
 \begin{itemize}
   \item   If $(G,p)$ is infinitesimally $g$-rigid, then it is locally $g$-rigid.
   \item If $p$ is generic, then $(G,p)$ is infinitesimally $g$-rigid if and only if it is locally $g$-rigid.
 \end{itemize}
 
\end{proposition}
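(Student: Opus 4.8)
The plan is to phrase everything in terms of the $g$-measurement map $f := f_{g,G}$, the fibre $X := f^{-1}(f(p))$ through $p$, and the orbit $O := \Gamma_g\cdot p$. I will use two structural facts from the set-up above: $O$, being the orbit of an algebraic group action, is a locally closed smooth submanifold of $(\FF^d)^V$ whose tangent space at $p$ is exactly $\triv_g(p)$; and $O\subseteq X$, so that $\triv_g(p)=T_pO\subseteq T_pX\subseteq\ker Jf(p)$, where the last inclusion is the standard fact that a tangent vector to a fibre is killed by the Jacobian. I will also use $\dim_p X\le\dim T_pX$ (immediate over $\CC$; over $\RR$ by comparing $X$ with its complexification).

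For the first bullet, suppose $\dim\ker Jf(p)=\dim\triv_g(p)$. Then the inclusions above collapse to equalities $T_pO=T_pX=\ker Jf(p)$, whence $\dim_p X\le\dim T_pX=\dim O=\dim_p O\le\dim_p X$; so $p$ is a smooth point of $X$ and, near $p$, $X$ is a single smooth branch of dimension $\dim O$. After passing to a Euclidean neighbourhood $N$ of $p$ small enough that $O\cap N$ is closed in $N$ (possible since $O$ is locally closed) and $X\cap N$ is connected, $O\cap N$ is a closed submanifold of $X\cap N$ of full dimension, hence open in $X\cap N$, hence all of $X\cap N$. Thus every $q\in f^{-1}(f(p))\cap N$ lies in $\Gamma_g\cdot p$, which is precisely local $g$-rigidity.

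For the second bullet the forward direction is the first bullet, so I assume $p$ generic and $(G,p)$ locally $g$-rigid, and aim to deduce $\dim\ker Jf(p)=\dim\triv_g(p)$. The crucial input is that a generic $p$ is a regular point of $f$: letting $r$ be the generic (hence maximal) rank of $Jf$, the locus where $\rank Jf<r$ is Zariski-closed, proper, and defined over $\QQ$ because $g$ is; a point with no algebraic relation over $\QQ$ among its coordinates therefore lies outside it, and since that locus is Zariski-closed a whole Euclidean neighbourhood $N$ of $p$ lies outside it too, so $\rank Jf\equiv r$ on $N$. By the constant rank theorem, after shrinking $N$ the set $X\cap N$ is a smooth submanifold of dimension $|V|d-r$ with tangent space $\ker Jf(p)$ at $p$. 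Local $g$-rigidity provides a possibly smaller neighbourhood on which $X\subseteq O$, and together with $O\subseteq X$ this forces $X$ and $O$ to agree near $p$; equality of the dimensions of these two submanifolds then gives $\dim\ker Jf(p)=|V|d-r=\dim O=\dim\triv_g(p)$, i.e. infinitesimal $g$-rigidity.

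The tangent-space bookkeeping and the appeal to the constant rank theorem are routine; the step with genuine content, and the main obstacle, is showing that a generic configuration is a regular point of $f_{g,G}$ — this is exactly where one uses that $g$ is a polynomial over $\QQ$, so that the rank-drop locus is $\QQ$-definable and avoided by generic points, and it is the reason genericity cannot be dropped in the second bullet. The one other point worth verifying carefully, rather than asserting, is that the $\Gamma_g$-orbit is locally closed with tangent space $\triv_g(p)$, since this is what lets one read off local $g$-rigidity from the identity ``fibre $=$ orbit near $p$''.
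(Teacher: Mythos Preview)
The paper does not supply a proof of this proposition; it is stated without argument, as a fact imported from the cited work of Cruickshank, Mohammadi, Nixon, and Tanigawa. There is therefore nothing in the paper to compare your proposal against.

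On its own merits your argument is the standard one and is essentially correct. A couple of points deserve the care you already flag. First, the step ``generic $p$ lies outside the rank-drop locus of $\jac f_{g,G}$'' requires that locus to be defined over $\QQ$, which in turn needs $g$ itself to have rational (or at least algebraic) coefficients; the paper's statement does not say this explicitly, but it is the standing assumption throughout (all the concrete $g$'s considered --- products, squared distances, inner products --- are over $\QQ$), and without it the second bullet is simply false as stated. Second, the assertion that the $\Gamma_g$-orbit is locally closed with tangent space $\triv_g(p)$ is safe here because $\Gamma_g$ is an \emph{algebraic} subgroup of $\hbox{Aff}(d,\FF)$ (being the stabiliser of a polynomial), and orbits of algebraic group actions are locally closed; for a general Lie group action this could fail. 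With those two caveats spelled out, your fibre-versus-orbit argument and the constant rank theorem give exactly what is needed.
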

Let us denote the dimension of the $\Gamma_{g}$ by $d_{\Gamma_{g}}$ and $n_{\Gamma_{g}}$ is the smallest integer such that $\dim \triv_{g}(p)=d_{\Gamma_{g}}$ for some $p\in (\mathbb{F}^d)^n$.

\begin{proposition}
 Let $(G,p)$ be a $k$ uniform hyperframework whose hypergraph $G=(V,E)$ is such that $ | V | \ge n_{\Gamma_{g}}$. Then let $g:(\mathbb{F}^d)^k\longrightarrow\mathbb {F}$ be a polynomial map. Then $\rank \jac f_{g,G}(p)\le d |V| -d_{\Gamma_{g}}$, and equality holds if and only if $(G,p)$ is locally $g$-rigid.
\end{proposition}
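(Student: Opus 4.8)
The plan is to read off both assertions from the inclusion $\triv_g(p)\subseteq\ker\jac f_{g,G}(p)$ --- which is precisely the statement recorded just above, namely that the image of the map $\mathfrak g\times(\mathbb F^d)^V\to(\mathbb F^d)^V$, $(\dot\gamma,p)\mapsto\dot\gamma\cdot p$, lies in the right kernel of $\jac f_{g,G}(p)$ --- combined with the rank--nullity identity $\rank\jac f_{g,G}(p)=d|V|-\dim\ker\jac f_{g,G}(p)$ and the preceding Proposition linking infinitesimal and local $g$-rigidity.

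First I would control $\dim\triv_g(p)$. By construction $\triv_g(p)=L_p(\mathfrak g)$ where $L_p\colon\mathfrak g\to(\mathbb F^d)^V$ is the linear map $\dot\gamma\mapsto\dot\gamma\cdot p$ (here $\mathfrak g$ denotes the Lie algebra of $\Gamma_g$), so $\dim\triv_g(p)\le\dim\mathfrak g=d_{\Gamma_g}$, with equality exactly when $L_p$ is injective. The hypothesis $|V|\ge n_{\Gamma_g}$ forces this equality for generic $p$: by definition of $n_{\Gamma_g}$ there is a configuration $p_0$ on $n_{\Gamma_g}$ vertices with $L_{p_0}$ injective; since for any $p'$ extending $p_0$ the composition of $L_{p'}$ with the projection onto the coordinates of those $n_{\Gamma_g}$ vertices equals $L_{p_0}$, every such $p'$ --- in particular any $p'$ on $|V|\ge n_{\Gamma_g}$ vertices --- has $L_{p'}$ injective, and as $\{p:L_p\text{ injective}\}$ is Zariski-open and now nonempty it is dense. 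Hence at a generic $p$ we get $\rank\jac f_{g,G}(p)\le d|V|-\dim\triv_g(p)=d|V|-d_{\Gamma_g}$, and because $\rank\jac f_{g,G}$ attains its maximum on a dense open subset of $(\mathbb F^d)^V$, the inequality $\rank\jac f_{g,G}(p)\le d|V|-d_{\Gamma_g}$ holds for every $p$.

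For the equality I would note that $\rank\jac f_{g,G}(p)=d|V|-d_{\Gamma_g}$ is the same as $\dim\ker\jac f_{g,G}(p)=d_{\Gamma_g}$; then, using $\dim\triv_g(p)=d_{\Gamma_g}$ (valid at generic $p$ by the previous step), the inclusion $\triv_g(p)\subseteq\ker\jac f_{g,G}(p)$ of spaces of equal dimension becomes an equality, i.e.\ $(G,p)$ is infinitesimally $g$-rigid, which by the preceding Proposition is equivalent --- for generic $p$ --- to local $g$-rigidity. The converse is the same chain read backwards: local $g$-rigidity of a generic $(G,p)$ yields infinitesimal $g$-rigidity, hence $\ker\jac f_{g,G}(p)=\triv_g(p)$, which has dimension $d_{\Gamma_g}$, hence the rank equality.

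I expect the main obstacle to be the bookkeeping around $n_{\Gamma_g}$ and genericity. One has to make precise that ``$L_p$ injective'', once attained on $n_{\Gamma_g}$ vertices, persists for all larger vertex sets --- monotonicity of the point-stabilizer in $\Gamma_g$ under adding vertices, together with upper semicontinuity of its dimension --- and one must be honest that the ``only if'' half of the equality genuinely needs $p$ generic, exactly as the second bullet of the preceding Proposition does, since at special configurations $\ker\jac f_{g,G}(p)$ may strictly contain $\triv_g(p)$ while $(G,p)$ is still locally $g$-rigid for purely dimensional reasons, or $L_p$ may fail to be injective. When $\mathbb F=\mathbb C$ one can sidestep the appeal to the preceding Proposition in the ``if'' direction by a fibre-dimension argument: local $g$-rigidity forces $f_{g,G}^{-1}(f_{g,G}(p))$ to coincide near $p$ with the orbit $\Gamma_g\cdot p$, of dimension $\dim\triv_g(p)\le d_{\Gamma_g}$, while the fibre-dimension theorem bounds this same number below by $d|V|-\dim\overline{\im f_{g,G}}\ge d|V|-(d|V|-d_{\Gamma_g})=d_{\Gamma_g}$, forcing $\dim\triv_g(p)=d_{\Gamma_g}$ and $\rank\jac f_{g,G}(p)=d|V|-d_{\Gamma_g}$.
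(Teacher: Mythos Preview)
The paper does not supply its own proof of this proposition: it is stated immediately after the definition of $d_{\Gamma_g}$ and $n_{\Gamma_g}$ and is followed directly by the next subsection, so there is nothing to compare against line by line. The result is quoted from the reference of Cruickshank, Mohammadi, Nixon, and Tanigawa.

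That said, your argument is the standard and correct one. The chain
\[
\triv_g(p)\subseteq\ker\jac f_{g,G}(p),\qquad
\rank\jac f_{g,G}(p)=d|V|-\dim\ker\jac f_{g,G}(p),
\]
together with $\dim\triv_g(p)\le d_{\Gamma_g}$ and the semicontinuity of the rank, gives the inequality exactly as you wrote. Your handling of $n_{\Gamma_g}$ --- extending a configuration on $n_{\Gamma_g}$ vertices and projecting to see that $L_p$ stays injective, hence the injective locus is nonempty Zariski-open --- is also the right mechanism.

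You are right to flag that the equivalence ``equality $\Leftrightarrow$ locally $g$-rigid'' is clean only at generic $p$: the direction ``equality $\Rightarrow$ local $g$-rigidity'' needs $\dim\triv_g(p)=d_{\Gamma_g}$ to upgrade the inclusion $\triv_g(p)\subseteq\ker\jac f_{g,G}(p)$ to an equality, and the converse needs the second bullet of the preceding proposition. The paper's statement suppresses this genericity hypothesis, but your reading is the intended one and matches how the result is used later (e.g.\ in the rank computations of the examples, where $p$ is always taken generic). Your alternative fibre-dimension argument over $\mathbb{C}$ is a nice independent check, though not needed for the exposition here.
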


\subsection{Infinitesimal $g$-rigidity and its Correlation with Certain Properties of Secant Varieties}

\begin{definition}
 Let $V$ be an affine variety in $\mathbb{C}^m$. Its $r$-secant variety is defined as $\Sect_{r}(V)= \overline {\bigcup_{x_1,...,x_r\in V} \langle x_1,...,x_r\rangle}$.
\end{definition}

%If the homogeneous polynomial map $g:(\mathbb{C}^d)^k\longrightarrow\mathbb{C}$ is written as the sum of polynomial maps $h:(\mathbb{C}^s)^k\longrightarrow\mathbb{C}$. In such a case, the concept of $g$-rigidity is closely related to the defectivity and identifibiality of varieties. 

The dimension of the $r$-th secant of an affine variety $V$ is at most $\min \{r \dim V,m\}$. The latter number is called the expected dimension, if $\dim \Sect _{r}(V)$ is smaller than the expected dimension then $V$ is called $r$-defective. Suppose that $g$ is the sum of $t$ copies of $h$. If $\overline{\im f_{h,G}}$ is not $t$-defective, then checking local $g$-rigidity is reduced to checking local $h$-rigidity. 

\begin{definition}
 Let $V$ be a variety in $\mathbb{C}^m$, a generic point $y\in \Sect _{r}(V)$ can be written as $y=\sum_{i=1}^{r} y_{i}$ for some $y_{i}\in V$. If $y_{1},...,y_{r}$ are uniquely determined up to permutations of indices and scaling of each $y_{i}$, then the variety $V$ is called $r$-identifiable.
\end{definition}

In the next section, we will see that identifiability is a powerful tool for checking global $g$-rigidity. 

\section{A Combinatorial Model for the Symmetric Tensors Completion Problem}
\subsection{Sufficient Conditions for Local $g$-rigidity}

In this section, we will give a sufficient condition for the local rigidity of hypergraphs $G$ which is called a packing-type condition.

Suppose that $g$ is the sum of $t$ copies of $h$. For a hyperedge $e\in \binom{[n]}{k}$ and $u\in e$, $e-u$ (resp., $e+u$) denotes the multiset obtained from $e$ by reducing (resp., increasing) the multiplicity of $u$ by one. We denote by $\supp(e)$ the set obtained from $e$ by ignoring the multiplicity of each element. Let $G=(V,E)$ be a $k$-uniform hypergraph with vertex set $V$ and hyperedge set $E$. For $X\subseteq V$, let $E_{G}[X]$ be the set of hyperedges $e$ of $G$ with $\supp (e)\subseteq X$ and $G[X]$ be the subgraph of $G$ induced by $X$, i.e., $G[X]=(X,E_{G}[X])$. For $\Acute{E}\subseteq E$, we define the closed neighbour set $N_{G}(\Acute{E})$ as follows $N_{G}(\Acute{E})=\{ e-u+v: e\in \Acute{E}, u\in e, v\in [n]\}\cap E$.

\begin{definition}
 A polynomial map $g:\mathbb({F}^d)^k\longrightarrow \mathbb{F}$ is said to be a multilinear $k$-form on $\mathbb{F}^d$ if it is linear on each argument on the vector space $\mathbb{F}^d$.
\end{definition}
\begin{theorem}
 Consider $g:(\mathbb{F}^d)^k\longrightarrow\mathbb{F}$ written as the sum of $t$  copies of a non-zero multilinear $k$-form $h:(\mathbb{F}^s)^k\longrightarrow\mathbb{F}$, where $st=d$. Let $G=([n],E)$ be a $k$-uniform hypergraph on $[n]$ and $\{ X_{1},...,X_{t}\}$ be a family of subsets $X_{i}$ of $[n]$ such that $|X_{i}|\ge n_{\Gamma _{h}}$. Denote $F_{i}=N_{G}(E_{G}[X_{i}])$ for each $i=1,...,t$. Suppose that:

 (P1) for every $i$, the hypergraph $([n],F_{i})$ is locally $h$-rigid as a hypergraph on $[n]$;

 (P2) for every $i$, $G[X_{i}]$ is locally $h$-rigid;

 (P3) for any $i, j$ with $i\neq j$, and for any $e\in F_{i}$ and $v\in e$, $\supp (e-v)\not\subseteq X_{j}$.
 Then $G$ is locally $g$-rigid.
\end{theorem}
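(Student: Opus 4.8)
I would run the whole argument through the rank characterisation of local $g$-rigidity. By the Proposition of Section~3.2 asserting $\rank \jac f_{g,G}(p) \le d|V| - d_{\Gamma_g}$, with equality exactly when $(G,p)$ is locally $g$-rigid (valid once $|V| \ge n_{\Gamma_g}$), it suffices to show that a \emph{generic} $p \in (\FF^d)^{[n]}$ attains $\rank \jac f_{g,G}(p) = dn - d_{\Gamma_g}$. Here $n = |[n]| \ge \max_i |X_i| \ge n_{\Gamma_h}$, and the inequality $n_{\Gamma_h} \ge n_{\Gamma_g}$ together with the identity $d_{\Gamma_g} = t\,d_{\Gamma_h}$ (which will also drop out of the count below) comes from the block-diagonal embedding $\Gamma_h^{\times t} \hookrightarrow \Gamma_g$ and the fact that $g$ acts on $\FF^d = (\FF^s)^{\oplus t}$ one $\FF^s$-block at a time. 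Since the rank of a matrix whose entries are polynomials with rational coefficients is lower semicontinuous and maximal on a nonempty Zariski-open set defined over $\QQ$, it is enough to exhibit \emph{one} configuration $p_0$ with $\rank \jac f_{g,G}(p_0) \ge dn - t\,d_{\Gamma_h}$; combined with the reverse inequality above this forces equality at the generic point and hence local $g$-rigidity.

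Next I would exploit the block structure. Writing each configuration as $p = (p^{(1)}, \dots, p^{(t)})$ with $p^{(i)} \colon [n] \to \FF^s$ and using $g(y_1,\dots,y_k) = \sum_{i=1}^t h(y_1^{(i)},\dots,y_k^{(i)})$, the chain rule gives the block-column identity $\jac f_{g,G}(p) = \bigl(\,\jac f_{h,G}(p^{(1)})\ \big|\ \cdots\ \big|\ \jac f_{h,G}(p^{(t)})\,\bigr)$. Choose $p_0 = (p_0^{(1)},\dots,p_0^{(t)})$ with each $p_0^{(i)}$ generic on $X_i$ and identically $0$ on $[n]\setminus X_i$. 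Hypothesis (P3) then yields two combinatorial facts. First, $F_1,\dots,F_t$ are pairwise disjoint: if $e \in F_i \cap F_j$ with $i\ne j$, writing $e = f-u+v$ with $f \in E_G[X_j]$ forces $\supp(e)\setminus\{v\} \subseteq X_j$ and $m_v(e) \le 1$, so deleting the unique possible vertex $v\notin X_j$ leaves a multiset supported in $X_j$, contradicting (P3) for $e\in F_i$. Second, for $j\ne i$ and $e\in F_i$ the whole row of $\jac f_{h,G}(p_0^{(j)})$ indexed by $e$ vanishes: for each vertex $w$ that row-block equals, up to a multiplicity, the multilinear form $h$ evaluated on $p_0^{(j)}$ along $e-w$ with one free slot, and by (P3) $\supp(e-w) \not\subseteq X_j$, so some argument is a vertex on which $p_0^{(j)}$ is $0$. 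Hence in $\jac f_{g,G}(p_0)$ the rows indexed by $F_1 \sqcup \cdots \sqcup F_t$ form a block-diagonal matrix whose $i$-th block is $\jac f_{h,G}(p_0^{(i)})$ restricted to the rows $F_i$, so
\[
  \rank \jac f_{g,G}(p_0) \;\ge\; \sum_{i=1}^{t} \rank\bigl(\jac f_{h,G}(p_0^{(i)})\big|_{F_i}\bigr).
\]

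Then I would compute each term on the right. Fix $i$, split the rows of $F_i$ into $E_G[X_i]$ and $F_i \setminus E_G[X_i]$, and split the columns into those of vertices in $X_i$ and those of vertices in $[n]\setminus X_i$. Every $e \in F_i \setminus E_G[X_i]$ has exactly one vertex $v(e)\notin X_i$, with multiplicity one; so multilinearity of $h$ and $p_0^{(i)}\equiv 0$ off $X_i$ kill both off-diagonal blocks and make the $\bigl(F_i\setminus E_G[X_i]\bigr)\times\bigl([n]\setminus X_i\bigr)$ block itself block-diagonal over $v\in[n]\setminus X_i$. Thus $\jac f_{h,G}(p_0^{(i)})\big|_{F_i}$ is block-diagonal, with one block equal to $\jac f_{h,G[X_i]}(p_0^{(i)}|_{X_i})$ — of rank $s|X_i| - d_{\Gamma_h}$ by (P2) and the rank Proposition, applicable since $|X_i| \ge n_{\Gamma_h}$ — and, for each $v\notin X_i$, an $s$-column block $B_v^{(i)}$ whose rows are the functionals $y \mapsto h\bigl(p_0^{(i)}|_{e-v},\,y\bigr)$ over $e\in F_i$ with $v(e)=v$. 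Hence
\[
  \rank\bigl(\jac f_{h,G}(p_0^{(i)})\big|_{F_i}\bigr) \;=\; \bigl(s|X_i| - d_{\Gamma_h}\bigr) + \sum_{v \in [n]\setminus X_i} \rank\bigl(B_v^{(i)}\bigr),
\]
and it remains to see each $\rank(B_v^{(i)}) = s$; plugging this in gives $\rank \jac f_{g,G}(p_0) \ge t(sn - d_{\Gamma_h}) = dn - t\,d_{\Gamma_h}$, closing the loop with the first paragraph.

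The main obstacle is precisely this last point: that every external column block $B_v^{(i)}$ has full rank $s$. This is where (P1) must be used in earnest. Local $h$-rigidity of the hypergraph $([n],F_i)$ forces each vertex of $[n]$, in particular every $v\notin X_i$, to be covered by enough hyperedges of $F_i$ (an undercovered vertex would produce a column rank deficiency at $v$ that $d_{\Gamma_h}$ cannot absorb), and since $p_0^{(i)}|_{X_i}$ is generic and $h$ is a non-zero multilinear form, the evaluations $h(p_0^{(i)}|_{e-v},\cdot)$ are in general position and span $(\FF^s)^{*}$; equivalently, one must verify that the subvariety of configurations vanishing off $X_i$ is not contained in the rank-deficient locus of $f_{h,([n],F_i)}$. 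A conceptually clean way to package both this and the auxiliary identities $d_{\Gamma_g} = t\,d_{\Gamma_h}$ and $n_{\Gamma_g}\le n_{\Gamma_h}$ is to phrase everything in terms of the generic $h$-rigidity matroid on $\binom{[n]}{k}$: (P1) says $F_i$ is spanning, (P2) says $E_G[X_i]$ spans $G[X_i]$, and the degeneration above exhibits the union of the $t$ spanning sets $F_i$ as a generic independent-rank computation for $f_{g,G}$.
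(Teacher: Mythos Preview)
The paper states this theorem without proof (it is quoted from Cruickshank--Mohammadi--Nixon--Tanigawa), so there is no in-paper argument to compare against. Your overall strategy --- specialize each $p^{(i)}$ to be generic on $X_i$ and zero elsewhere, use (P3) to make the rows of $\jac f_{g,G}(p_0)$ indexed by $F_1\sqcup\cdots\sqcup F_t$ block-diagonal, then read off the rank of each diagonal block from (P2) --- is the natural one and matches the shape the hypotheses (P1)--(P3) are designed for.

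The gap is exactly where you place it. You need $\rank B_v^{(i)}=s$ for every $v\notin X_i$, and you appeal to (P1); but (P1) only asserts local $h$-rigidity of $([n],F_i)$ at a \emph{generic} configuration $q\in(\FF^s)^{[n]}$, whereas your $p_0^{(i)}$ is identically zero off $X_i$ and hence far from generic. Lower semicontinuity of rank goes the wrong way here: full rank at the generic point does not preclude a drop at $p_0^{(i)}$. Your parenthetical (``one must verify that the subvariety of configurations vanishing off $X_i$ is not contained in the rank-deficient locus'') and the matroid reformulation at the end both name the missing step correctly but do not carry it out; indeed, for a degenerate $h$ (say $h=\ell^{\otimes k}$ for a single linear form $\ell$) every row of $B_v^{(i)}$ is a scalar multiple of $\ell$, so ``$p_0^{(i)}|_{X_i}$ generic and $h$ nonzero'' cannot by itself force the span to be all of $(\FF^s)^*$. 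What is actually required is a separate argument --- using that every $e\in F_i$ differs from an edge of $G[X_i]$ in a single vertex, together with (P1) and (P2) in tandem --- showing that $\jac f_{h,([n],F_i)}$ still has rank $sn-d_{\Gamma_h}$ at the degenerate $p_0^{(i)}$; only then does your block count force each $B_v^{(i)}$ to have rank $s$. Two smaller points: in the disjointness step your intermediate claim $m_v(e)\le 1$ can fail (take $v$ already occurring in $f$), though the contradiction with (P3) survives by choosing the added vertex; and the identity $d_{\Gamma_g}=t\,d_{\Gamma_h}$ is a nontrivial statement about the stabilizer of a direct sum of multilinear forms that you assert rather than prove.
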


\subsection{Sufficient Conditions for Global $g$-rigidity}

In the previous section, we gave a sufficient condition for the local rigidity of hypergraphs, in this section we will extend this result to global $g$-rigidity over the field of complex numbers.

\begin{proposition}
If a hypergraph $G$ is globally $g$-rigid over $\mathbb{C}$, then it is globally $g$-rigid over $\mathbb{R}$. 
\end{proposition}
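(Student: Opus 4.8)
The plan is to show that real global $g$-rigidity follows from complex global $g$-rigidity by a transfer-of-field argument, working entirely with the fibers of the $g$-measurement map. Let $(G,p)$ be a hyperframework with $p \colon V \to \RR^d$, and suppose that $G$ is globally $g$-rigid over $\CC$. I want to prove $(G,p)$ is globally $g$-rigid over $\RR$; since $p$ was an arbitrary real configuration (for a generic statement one would restrict to generic $p$, and genericity over $\QQ$ is the same notion whether we view $p$ in $\RR^d$ or $\CC^d$), this gives the claim. First I would view $p$ as a point of $(\CC^d)^V$ via the inclusion $\RR \hookrightarrow \CC$, and observe that the real fiber $f_{g,G}^{-1}(f_{g,G}(p)) \cap (\RR^d)^V$ is contained in the complex fiber $f_{g,G}^{-1}(f_{g,G}(p)) \subseteq (\CC^d)^V$, because $f_{g,G}$ is defined by the same polynomials with rational (indeed integer) coefficients over either field.

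Next, take any $q \in (\RR^d)^V$ with $f_{g,G}(q) = f_{g,G}(p)$. By the previous observation, $q$ lies in the complex fiber over $f_{g,G}(p)$, so by complex global $g$-rigidity there is some $\gamma \in \Gamma_g(\CC)$ — the complex stabilizer — with $q = \gamma \cdot p$. The key step is then to promote this complex symmetry $\gamma$ to a \emph{real} one: I need $\gamma' \in \Gamma_g(\RR)$ with $q = \gamma' \cdot p$. This is where complex conjugation enters. Since $p$ and $q$ are both real, applying coordinatewise complex conjugation to the equation $q = \gamma \cdot p$ gives $q = \bar\gamma \cdot p$, where $\bar\gamma$ is the conjugate affine transformation; moreover $\bar\gamma \in \Gamma_g$ as well, since $g$ has real (rational) coefficients and conjugation commutes with the induced action on polynomial maps. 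So both $\gamma$ and $\bar\gamma$ carry $p$ to $q$.

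The main obstacle is exactly the last mile: from $q = \gamma\cdot p = \bar\gamma\cdot p$ with $\gamma,\bar\gamma \in \Gamma_g(\CC)$, extract a single element of $\Gamma_g(\RR)$ that does the job. The natural candidate is a suitable "real average" of $\gamma$ and $\bar\gamma$, but $\Gamma_g$ is a group, not a vector space, so one cannot literally take $\tfrac12(\gamma+\bar\gamma)$. I would proceed as follows: the set $S = \{\delta \in \Gamma_g(\CC) : \delta \cdot p = q\}$ is a coset of the stabilizer subgroup $\mathrm{Stab}(p) = \{\delta \in \Gamma_g(\CC) : \delta\cdot p = p\}$, it is defined over $\RR$ (being cut out of the algebraic group $\Gamma_g$ by the polynomial conditions $\delta\cdot p = q$ with real data), and it is nonempty over $\CC$ since $\gamma \in S$. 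An $\RR$-defined nonempty coset of an algebraic group possesses an $\RR$-point — this is the standard fact that a torsor under a connected (more generally, suitably rational) linear algebraic group over $\RR$ with a $\CC$-point has an $\RR$-point, which for the affine-type groups arising here can be seen concretely because the coset is a translate of a linear subspace-like subvariety stable under conjugation. Picking such a $\gamma' \in S(\RR) \subseteq \Gamma_g(\RR)$ yields $q = \gamma'\cdot p$ with $\gamma'$ real, completing the argument. In the write-up I would either invoke this descent fact directly or, if the groups $\Gamma_g$ relevant to multilinear forms are explicit enough (e.g. products of general linear and translation groups, cf.\ the stabilizers appearing in Theorem~4.3), give the averaging construction by hand on each factor.
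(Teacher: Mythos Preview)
Your opening genericity observation --- that a configuration generic over $\QQ$ in $(\RR^d)^V$ is automatically generic when viewed in $(\CC^d)^V$ --- is exactly the paper's entire proof. The paper stops there: it notes that a generic real $(G,p)$ is generic over $\CC$, hence globally $g$-rigid, and declares the argument finished. In particular, the paper never addresses the issue you single out as the ``key step'': producing a \emph{real} $\gamma' \in \Gamma_g(\RR)$ once you know a complex $\gamma \in \Gamma_g(\CC)$ with $q = \gamma\cdot p$ exists. So your write-up is strictly more careful than the paper's, and you have correctly located a point the paper glosses over.

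That said, your proposed descent step has a genuine gap. The assertion that ``an $\RR$-defined nonempty coset of an algebraic group possesses an $\RR$-point'' --- equivalently, that a torsor under a connected linear algebraic group over $\RR$ with a $\CC$-point has an $\RR$-point --- is false in general: already $H^1(\RR,\mathrm{SO}_3)$ and $H^1(\RR,\mathrm{PGL}_2)$ are nontrivial, so there are real-defined principal homogeneous spaces with complex points but no real ones. Your hedge ``suitably rational'' does not pin down a usable hypothesis. The cleaner route, and the one that actually works here, is to exploit genericity of $p$: for $|V|$ large enough the pointwise stabilizer $\mathrm{Stab}_{\Gamma_g(\CC)}(p)$ is trivial (this is what underlies the equality $\dim \triv_g(p)=d_{\Gamma_g}$ used elsewhere), so the coset $S=\{\delta:\delta\cdot p=q\}$ is a single point. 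Since $S$ is cut out by real equations and is Galois-stable (your conjugation argument shows $\bar\gamma\in S$), that single point satisfies $\gamma=\bar\gamma$, i.e.\ $\gamma$ is already real. I would replace the torsor appeal with this direct argument.
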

\begin{proof}
 Suppose that $G$ is globally $g$-rigid over $\mathbb{C}$, hence any generic hyperframework $(G,p)$ of $G$ over $\mathbb{C}$ is globally rigid. Any generic configuration $q$ over $\mathbb{R}$ is also generic over $\mathbb{C}$ because genericity is defined in terms of algebraic independence over $\mathbb{Q}$. The hyperframework $(G,q)$ is generic over $\mathbb{C}$ for any real configuration $q$, hence it is globally rigid.
\end{proof}

\begin{proposition}
Assume $g:(\mathbb{F}^d)^k\longrightarrow\mathbb{F}$ is the sum of $t$ copies of a homogeneous $h:(\mathbb{F}^s)^k\longrightarrow\mathbb{F}$. Suppose that $\overline{\im {f_{h,G}}}$ is $t$-identifiable. Then $G$ is globally $g$-rigid if and only if $G$ is globally $h$-rigid.
\end{proposition}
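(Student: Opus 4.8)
The plan is to exploit the product structure of $g$. Writing $\FF^d=(\FF^s)^t$ (so $d=st$), every configuration $p\in(\FF^d)^V$ splits as $p=(p^{(1)},\dots,p^{(t)})$ with $p^{(i)}\in(\FF^s)^V$, and since $g$ is the sum of $t$ copies of $h$ one has $f_{g,G}(p)=\sum_{i=1}^t f_{h,G}(p^{(i)})$; equivalently $f_{g,G}=\mathrm{add}_t\circ(f_{h,G})^{\times t}$, where $\mathrm{add}_t$ is coordinatewise addition. Since $h$ is homogeneous, $\im f_{h,G}$ is a cone, so $W:=\overline{\im f_{h,G}}$ is a cone and $\overline{\im f_{g,G}}=\Sect_t(W)$; in particular, for generic $p$ the point $y:=f_{g,G}(p)=\sum_i w_i$ with $w_i:=f_{h,G}(p^{(i)})$ is a generic point of $\Sect_t(W)$, presented through a generic $t$-tuple $(w_1,\dots,w_t)$ of points of $W$. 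Two structural facts are used throughout: (i) $\Gamma_g$ contains the subgroup generated by permutations of the $t$ coordinate blocks and by block-diagonal copies of $\Gamma_h$, because permuting blocks, or acting by $\Gamma_h$ inside each block, leaves $g$ (a sum over blocks of $h$) invariant; (ii) $t$-identifiability of $W$ entails that a generic $t$-tuple of points of $W$ is linearly independent in $\FF^E$ --- it forces the generic fibres of $\mathrm{add}_t\colon W^t\to\Sect_t(W)$ to be finite, i.e.\ $\dim\Sect_t(W)=t\dim W$, hence $t\le\dim\operatorname{span}W$.

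For "$G$ globally $h$-rigid $\Rightarrow$ $G$ globally $g$-rigid", take $p$ generic and suppose $f_{g,G}(q)=f_{g,G}(p)=y$. Writing $q=(q^{(1)},\dots,q^{(t)})$ and $v_i:=f_{h,G}(q^{(i)})$ gives a second decomposition $y=\sum_i v_i$, so by $t$-identifiability there are a permutation $\tau$ and scalars $\lambda_i\in\FF^\times$ with $v_i=\lambda_i w_{\tau(i)}$. Comparing $\sum_i v_i=\sum_i w_i$ yields $\sum_j(\lambda_{\tau^{-1}(j)}-1)w_j=0$, and linear independence of the $w_j$ (fact (ii)) forces every $\lambda_i=1$. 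Hence $f_{h,G}(q^{(i)})=f_{h,G}(p^{(\tau(i))})$ for each $i$; each $p^{(\tau(i))}$ is generic, so global $h$-rigidity gives $q^{(i)}=\gamma_i\cdot p^{(\tau(i))}$ with $\gamma_i\in\Gamma_h$. Assembling the $\gamma_i$ with the block permutation $\tau$ produces, via fact (i), an element $\gamma\in\Gamma_g$ with $\gamma\cdot p=q$, which is global $g$-rigidity.

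For the converse, assume $G$ is globally $g$-rigid, let $p'\in(\FF^s)^V$ be generic, and suppose $f_{h,G}(q')=f_{h,G}(p')$. Pad to $p:=(p',p^{(2)},\dots,p^{(t)})$, choosing $p^{(2)},\dots,p^{(t)}$ so that $p$ is generic, and set $q:=(q',p^{(2)},\dots,p^{(t)})$; then $f_{g,G}(q)=f_{h,G}(q')+\sum_{i\ge2}f_{h,G}(p^{(i)})=f_{g,G}(p)$, so by global $g$-rigidity $q=\gamma\cdot p$ for some $\gamma=(A,b)\in\Gamma_g$. Because $q$ agrees with $p$ outside the first block, writing $A$ in $t\times t$ blocks $A_{ij}$ the identities $\sum_j A_{ij}p^{(j)}(v)+b^{(i)}=p^{(i)}(v)$ for $i\ge2$ hold on the affinely spanning point set $\{p(v):v\in V\}$, forcing $A_{ii}=I$, $A_{ij}=0$ for $j\ne i$, and $b^{(i)}=0$ whenever $i\ge2$. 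Substituting this back into the invariance $g(\gamma x_1,\dots,\gamma x_k)=g(x_1,\dots,x_k)$ cancels the summands with $i\ge2$ and leaves $h(A_{11}x_1^{(1)}+\sum_{j\ge2}A_{1j}x_1^{(j)}+b^{(1)},\dots)=h(x_1^{(1)},\dots,x_k^{(1)})$; specialising the off-block variables to $0$ shows $(A_{11},b^{(1)})\in\Gamma_h$, and a non-degeneracy argument for $h$ --- if $h$, equivalently $g$, had a nontrivial direction of degeneracy then a dimension count on fibres versus orbits would contradict global $g$-rigidity --- then forces $A_{1j}=0$ for $j\ge2$. Hence $q'=A_{11}p'+b^{(1)}=\gamma'\cdot p'$ with $\gamma':=(A_{11},b^{(1)})\in\Gamma_h$, so $G$ is globally $h$-rigid.

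I expect the main obstacle to be the scaling ambiguity built into identifiability: the hypothesis only determines the decomposition of $y$ up to permutation and independent rescaling of the summands, whereas rescaling a single block of a configuration generally does not lie in $\Gamma_g$ (if $h$ is homogeneous of degree $r>1$, only $r$-th roots of unity survive), so the scalars $\lambda_i$ genuinely must be forced to $1$. This is exactly where fact (ii) is essential, and making it rigorous requires confirming that the $t$-identifiability in play here lives in the non-defective range $t\le\dim\operatorname{span}\overline{\im f_{h,G}}$ in which the required linear independence holds; outside that range the statement would need a different treatment (and is presumably not intended). The remaining ingredients are routine: the genericity bookkeeping (a generic $p$ has generic, mutually generic blocks; $f_{g,G}(p)$ is then a generic point of $\Sect_t(W)$; $\{p(v)\}$ affinely spans $\FF^d$ once $|V|$ is large enough), and the identification in the converse of the relevant block of $\Gamma_g$ with $\Gamma_h$.
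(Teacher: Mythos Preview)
Your approach is essentially the paper's: both directions go through the blockwise decomposition $f_{g,G}(p)=\sum_i f_{h,G}(p^{(i)})$ and the $t$-identifiability hypothesis. The paper's proof, however, is a two-sentence sketch---it asserts the implication ``$g$-rigid $\Rightarrow$ $h$-rigid'' as ``necessary'' with no argument, and for the other direction it invokes identifiability without mentioning the scaling ambiguity at all---whereas you fill in both of these. Your treatment of the scaling issue (forcing all $\lambda_i=1$ via linear independence of a generic $t$-tuple on $W=\overline{\im f_{h,G}}$) is precisely the missing step, and your caveat that this tacitly places $t$ in the non-defective range is well taken; the paper simply does not engage with this. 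Your padding argument for the converse is a genuine addition beyond what the paper supplies; the only soft spot is the ``non-degeneracy'' step used to force $A_{1j}=0$ for $j\ge 2$, which you correctly flag as a sketch and which would need either a clean hypothesis on $h$ or a direct argument that the extra affine freedom it would provide is incompatible with global $g$-rigidity.
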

\begin{proof}
 Global $h$-rigidity is necessary for global $g$-rigidity. To prove the converse direction of the proposition, let us take any generic $p\in (\mathbb{F}^d)^V$. Since $g$ is the sum of $t$ copies of $h$, $p\in (\mathbb{F}^d)^V$ can be decomposed into $(q_{1},...,q_{t})$ such that $f_{g,G}(p)=\sum_{i=1}^{t} f_{h,G}(q_{i})$, where $q_{i}\in (\mathbb{F}^s)^V$. Each $q_{i}$ is generic because of genericity of $p$. So $(G,q_{i})$ is globally $h$-rigid and $t$-identifiability of $\overline{\im {f_{h,G}}}$ implies the global $g$-rigidity of $(G,p)$.
\end{proof}

Let $V$ be a variety over $\mathbb{C}$ and $T_{x}V$ be a tangent space at $x\in V$.
\begin{definition}
 The $t$-tangential contact locus $C$ at generic $t$ points $x_{1},...,x_{t}$ is defined as follows: $C=\overline{\{ y\in V:T_{y}V\subseteq \langle T_{x_{1}}V,...,T_{x_{t}}V\rangle}$. A variety $V$ is called $t$-tangentially weakly defective if $C$ is not one dimensional. 
\end{definition}

\begin{theorem}[Chiantini and Ottaviani]
Assume that an affine variety $V$ over $\mathbb{C}$ is $t$-tangentially weakly non-defective. Then $V$ is $t$-identifiable.
\end{theorem}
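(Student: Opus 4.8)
The plan is to combine Terracini's Lemma with the hypothesis to compare two arbitrary length-$t$ decompositions of a generic point of $\Sect_t(V)$, and to show that the second is forced to be a rescaling and reordering of the first. It is convenient to work in the relevant conical setting, so that $\Sect_t(V)=\overline{\{w_1+\dots+w_t : w_i\in V\}}$ and $T_{\lambda x}V=T_xV$ for $\lambda\neq 0$.

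First I would record two reductions. Because $V$ is $t$-tangentially weakly non-defective it is not $t$-defective: this is the classical Terracini--Palatini principle that dependence of $t$ general tangent spaces forces the tangential contact locus to have dimension larger than one. Non-defectivity then gives two things: the summation map $\pi\colon V^t\to\Sect_t(V)$ is dominant and generically finite, so the fibre of $\pi$ over a generic $y$ is exactly the set of length-$t$ decompositions of $y$; and $\Sect_{t-1}(V)\subsetneq\Sect_t(V)$, so a generic $y\in\Sect_t(V)$ has no decomposition of length $<t$.

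Now fix a generic $y\in\Sect_t(V)$. Since $\pi$ is dominant and $V^t$ is irreducible, I may choose a decomposition $y=x_1+\dots+x_t$ whose tuple $(x_1,\dots,x_t)$ is generic in $V^t$, so that Terracini's Lemma gives $T_y\Sect_t(V)=\langle T_{x_1}V,\dots,T_{x_t}V\rangle$. Let $y=z_1+\dots+z_t$ be any other decomposition. For each $j$ the translate $\{z_1+\dots+z_{j-1}+w+z_{j+1}+\dots+z_t : w\in V\}$ of $V$ lies in $\Sect_t(V)$ and passes through $y$, so $T_{z_j}V\subseteq T_y\Sect_t(V)=\langle T_{x_1}V,\dots,T_{x_t}V\rangle$; thus each $z_j$ lies in the $t$-tangential contact locus $C=C(x_1,\dots,x_t)$, which is the generic one because $(x_1,\dots,x_t)$ is generic. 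The hypothesis says $\dim C=1$, and being a cone $C$ is then a finite union of lines through the origin, among which are $\mathbb{C}x_1,\dots,\mathbb{C}x_t$. Granting $C=\mathbb{C}x_1\cup\dots\cup\mathbb{C}x_t$, each $z_j$ equals $c_jx_{\sigma(j)}$ for some $\sigma\colon[t]\to[t]$ and scalars $c_j$; and $\sigma$ must be a bijection, for otherwise grouping the $z_j$ on a common line would exhibit $y$ as a sum of fewer than $t$ elements of the cone $V$, against $y\notin\Sect_{t-1}(V)$. Hence $(z_1,\dots,z_t)$ is a permutation-and-rescaling of $(x_1,\dots,x_t)$, i.e. $V$ is $t$-identifiable.

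The genuine obstacle, I expect, is the claim $C(x_1,\dots,x_t)=\mathbb{C}x_1\cup\dots\cup\mathbb{C}x_t$: that the one-dimensional contact locus carries no spurious extra line; everything else above is formal given Terracini's Lemma. I would first attempt a monodromy argument --- $\pi$ is generically finite with irreducible source, so its monodromy group (modulo the tautological $S_t$-action) acts transitively on the generic fibre, and since every decomposition of $y$ lies in $C(x_1,\dots,x_t)$, which is finite modulo scaling, transitivity ought to identify $C$ with the summands of one decomposition. An alternative is induction on $t$: slicing by a general hyperplane containing $T_{x_1}V,\dots,T_{x_{t-1}}V$ converts the contact locus of $(x_1,\dots,x_t)$ into that of a $(t-1)$-tuple on the hyperplane section, to which one applies $(t-1)$-tangential weak non-defectivity of the section, with $t=1$ (a general Gauss-map fibre is a line) as base case. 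Either way, this descent --- or the monodromy bookkeeping --- is where the hypothesis is used essentially, and it is the part requiring real care.
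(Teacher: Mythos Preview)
The paper does not prove this statement at all: Theorem~4.6 is quoted from Chiantini--Ottaviani and invoked only as a black box (in Corollary~4.7 and, via Theorem~4.14, in Corollary~4.15). There is no argument in the paper to compare your proposal against.

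On the proposal itself: the skeleton is the right one and is essentially the Chiantini--Ottaviani strategy. Terracini's lemma places every summand of any alternative decomposition of a generic $y$ inside the tangential contact locus $C(x_1,\dots,x_t)$; the hypothesis forces $C$ to be a finite union of lines through the origin; and you correctly isolate as the crux the claim that $C$ has no spurious line beyond $\mathbb{C}x_1\cup\cdots\cup\mathbb{C}x_t$. That gap is genuine. Your monodromy suggestion, however, does not close it as stated: monodromy of the summation map $\pi\colon V^t\to\Sect_t(V)$ acts on \emph{decompositions} of $y$, and an extra line $\mathbb{C}z\subset C$ need not contribute a summand to any decomposition of $y$, so transitivity on the fibre of $\pi$ says nothing about $z$. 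The induction-by-slicing idea is closer to what is done in the literature, but it requires knowing that a general hyperplane section of $V$ is again $(t{-}1)$-tangentially weakly non-defective, which is itself a nontrivial inheritance statement and cannot simply be assumed. In the original proof this step is handled by analysing the incidence correspondence $\{(z;x_1,\dots,x_t):z\in C(x_1,\dots,x_t)\}\subset V\times V^t$ and arguing by irreducibility and dimension that an extra component would force $C$ to be positive-dimensional in the projective sense, contradicting the hypothesis.
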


\begin{corollary}
 Suppose a homogeneous $g:(\mathbb{C}^d)^k\longrightarrow\mathbb{C}$ is the sum of $t$ copies of $h:(\mathbb{C}^s)^k\longrightarrow\mathbb{C}$. Then $G$ is globally $g$-rigid if
\begin{itemize}
    \item 
    $\overline{\im f_{h,G}}$ is $t$-tangentially weakly non-defective
    \item $G$ is globally $h$-rigid.
\end{itemize}
 
\end{corollary}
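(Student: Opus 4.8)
The corollary follows by chaining together two results already in hand, so the plan is essentially bookkeeping rather than new mathematics. First I would invoke Theorem~4.?? (Chiantini--Ottaviani) with the variety $V = \overline{\im f_{h,G}}$: by hypothesis $V$ is $t$-tangentially weakly non-defective, so the theorem yields that $V$ is $t$-identifiable. This is the step that does the real work, and it is the one I expect to be the ``main obstacle'' only in the sense that one must be careful that $\overline{\im f_{h,G}}$ genuinely satisfies the standing assumptions of that theorem --- namely that it is an affine (indeed, since $h$ is homogeneous, a cone) variety over $\mathbb{C}$, so that secant-variety language applies. Given that $h\colon(\mathbb{C}^s)^k\to\mathbb{C}$ is homogeneous, $f_{h,G}$ is a tuple of homogeneous polynomials of the same degree, hence its image closure is a cone through the origin, and the Chiantini--Ottaviani machinery applies verbatim.

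Second, with $t$-identifiability of $\overline{\im f_{h,G}}$ established, I would apply Proposition~4.?? (the ``$g$ is the sum of $t$ copies of a homogeneous $h$'' proposition), whose hypotheses are now exactly met: $g$ is the sum of $t$ copies of the homogeneous map $h$, and $\overline{\im f_{h,G}}$ is $t$-identifiable. That proposition gives the equivalence ``$G$ is globally $g$-rigid $\iff$ $G$ is globally $h$-rigid.'' In particular the direction we need --- global $h$-rigidity implies global $g$-rigidity --- holds. Combining this with the second bullet of the corollary's hypothesis ($G$ is globally $h$-rigid) yields that $G$ is globally $g$-rigid, which is precisely the claim.

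So the proof is a two-line composition: Chiantini--Ottaviani converts ``$t$-tangentially weakly non-defective'' into ``$t$-identifiable,'' and then the earlier proposition converts ``$t$-identifiable plus globally $h$-rigid'' into ``globally $g$-rigid.'' There is no computation to grind through; the only thing worth spelling out in the write-up is the observation that homogeneity of $h$ makes $\overline{\im f_{h,G}}$ a cone, so that it is a legitimate input to both the secant-variety definitions and the Chiantini--Ottaviani theorem. If one wanted to be maximally careful, one could also note that $s t = d$ is implicitly needed for ``$g$ is the sum of $t$ copies of $h$'' to typecheck as a map $(\mathbb{C}^d)^k\to\mathbb{C}$, but this is part of the hypothesis ``$g$ is the sum of $t$ copies of $h$'' rather than something to be verified.
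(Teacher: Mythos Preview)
Your proposal is correct and matches the paper's own proof exactly: the paper simply writes ``A combination of Proposition~4.4 and Theorem~4.6,'' which is precisely the two-step chaining you describe (Chiantini--Ottaviani turns $t$-tangentially weakly non-defective into $t$-identifiable, then Proposition~4.4 upgrades global $h$-rigidity to global $g$-rigidity). Your additional remarks about $\overline{\im f_{h,G}}$ being a cone and the implicit $st=d$ are reasonable elaborations but go beyond what the paper spells out.
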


\begin{proof}
 A combination of Proposition 4.4 and Theorem 4.6.   
\end{proof}

\begin{remark}The $t$-tangentially weakly defectiveness of $\overline{\im f_{h,G}}$ can be checked by characterizing the $t$-tangential contact locus in algebraic way.  Let $G$ be a $k$-uniform hypergraph with $n$ vertices and $m$ edges, and $g:(\mathbb{C}^d)^k\longrightarrow\mathbb{C}$ is the sum of $t$ copies of $h:(\mathbb{C}^s)^k\longrightarrow\mathbb{C}$. Let $p\in (\mathbb{C}^d)^n$ be a generic point-configuration, and let $\omega_{1},...,\omega_{k}\in \mathbb{C}^m$ be a basis of the orthogonal complement of the tangent space of $\overline{\im f_{g,G}}$ at $f_{g,G}(p)$. Since the tangent space of $\overline{\im f_{h,G}}$ at $f_{h,G}(q)$ is the image of $\jac f_{h,G}(q)$, the $t$-tangential contact locus $C$ is the closure of $\{q\in (\mathbb{C}^s)^n: \omega_{i}\in \ker \jac f_{h,G}(q)^{\top} (i=1,...,k)\}$. $\omega_{i}\in \ker \jac f_{h,G}(q)^{\top}$
forms a polynomial system in $q$, hence there is a polynomial map $\alpha_{i}:(\mathbb{C}^s)^n\longrightarrow (\mathbb{C}^s)^n$ such that $\omega_{i}\in \ker \jac f_{h,G}(q)^{\top}$ if and only if $\alpha_{i}(q)=0$. Therefore, the tangential contact locus is written as the closure of $\{ q\in (\mathbb{C}^s)^n: \alpha_{i}(q)=0 
 (i=1,...,k)\}$ and in order to check the $t$-tangentially weakly defectiveness of $\overline{\im f_{h,G}}$ it is enough to examine the rank of the Jacobian matrix of $\alpha_{i}$.
 \end{remark}
 
\begin{remark}
 If the map $g$ is the squared Euclidean distance, then the resulting Jacobian matrix of $\alpha_{i}$ is the Laplacian matrix of $G$ weighted by $\omega_{i}$. 
\end{remark}

\begin{definition}[Laplacian Matrix]
 Let $G = (V,E)$ be a simple graph. The Laplacian matrix $G$ is the matrix whose entries are defined as follows:
 $$L_{ij}=\begin{cases}
   \deg(v_{i}), &\hbox{if $i=j$}\\
   -1, &\hbox{$i\neq j$ and $v_{i}v_{j}\in E$}\\
   0, &\hbox{otherwise}
 \end{cases}$$
\end{definition}

\begin{example}
Let $G$ be the graph $(V = \{1,2,3,4\}, E = \{12, 14, 23, 34\})$. We want to compare its the resulting Jacobian matrix of $\alpha_{i}$ with its Laplacian matrix. 

Let $p=\begin{pmatrix}
    x_{a_1}&x_{b_1}&x_{c_1}&x_{d_1}\\
    x_{a_2}&x_{b_2}&x_{c_2}&x_{d_2}
\end{pmatrix}$
be a point-configuration which sends vertices 1,2,3 and 4 to $(x_{a_1},x_{a_2}),(x_{b_1},x_{b_2}),(x_{c_1},x_{c_2})$ and $(x_{d_1},x_{d_2})$, respectively. In this example, the map $g$ is the squared Euclidean distance, therefore $g$ is the sum of 2 copies of $h:\mathbb{R}^2\longrightarrow\mathbb{R}$, where $h(x,y)=(x-y)^2$. Then, by definition $h$-measurement map of $G$ is defined 

$f_{h,G}(x_{a_{1}},x_{b_{1}},x_{c_{1}},x_{d_{1}})=((x_{a_{1}}-x_{b_{1}})^2, (x_{b_{1}}-x_{c_{1}})^2, (x_{c_{1}}-x_{d_{1}})^2, (x_{d_{1}}-x_{a_{1}})^2)$.

Its Jacobian matrix equals 

\begin{center}
$\jac _{f_{h,G}}=
\begin{pmatrix}
 2(x_{a_1}-x_{b_1})&0&0&-2(x_{d_{1}}-x_{a_{1}})\\
 -2(x_{a_{1}}-x_{b_{1}})&2(x_{b_{1}}-x_{c_{1}})&0&0\\
 0&-2(x_{b_{1}}-x_{c_{1}})&2(x_{c_{1}}-x_{d_{1}})&0\\
 0&0&-2(x_{c_{1}}-x_{d_{1}})&2(x_{d_{1}}-x_{a_{1}})
\end{pmatrix}$
\end{center}
Let find left kernel of $\jac {f_{h,G}}(p)^{\top}$. The vector $v=(v_{1},v_{2},v_{3},v_{4})\in \jac {f_{h,G}}(p)^{\top}$ if and only if its coordinates satisfy the system of equations:
$$\begin{cases}
  2(x_{a_{1}}-x_{b_{1}})v_{1}-2(x_{d_{1}}-x_{a_{1}})v_{4}=0\\

  -2(x_{a_{1}}-x_{b_{1}})v_{1}+2(x_{b_{1}}-x_{c_{1}})v_{2}=0\\

  -2(x_{b_{1}}-x_{c_{1}})v_{2}+2(x_{c_{1}}-x_{d_{1}})v_{3}=0\\

  -2(x_{c_{1}}-x_{d_{1}})v_{3}+2(x_{d_{1}}-x_{a_{1}})v_{4}=0  
\end{cases} \hbox{or equivalently,} \quad \begin{cases}
(2v_{1}+2v_{4})x_{a_{1}}-2v_{1}x_{b_{1}}-2v_{4}x_{d_{1}}=0\\

-2v_{1}x_{a_{1}}+(2v_{1}+2v_{2})x_{b_{1}}-2v_{2}x_{c_{1}}=0\\

-2v_{2}x_{b_{1}}+(2v_{2}+2v_{3})x_{c_{1}}-2v_{3}x_{d_{1}}=0\\

-2v_{4}x_{a_{1}}-2v_{3}x_{c_{1}}+(2v_{3}+2v_{4})x_{d_{1}}=0
\end{cases}$$
Then, the resulting Jacobian matrix of polynomials $\alpha_{i}$ equals
$$\begin{pmatrix}
 2v_{1}+2v_{4}&-2v_{1}&0&-2v_{4}\\
 -2v_{1}&2v_{1}+2v_{2}&-2v_{1}&0\\
 0&-2v_{2}&2v_{2}+2v_{3}&-2v_{3}\\
 -2v_{4}&0&-2v_{3}&2v_{3}+2v_{4}
\end{pmatrix}$$
On the other hand, its Laplacian matrix equals: 

$$\begin{pmatrix}
 2&-1&0&-1\\
 -1&2&-1&0\\
 0&-1&2&-1\\
 -1&0&-1&2
\end{pmatrix}$$

Indeed, the Jacobian matrix of $\alpha_{i}$ is the Laplacian matrix of $G$ weighted by $v_{i}$.

\end{example}

\begin{example}
 Consider the symmetric matrix completion problem with symmetric rank $d=3$, order $k=2$ and size $n=3$. Let $$T=\begin{pmatrix}
     x_{11}&0&0\\
     0&x_{22}&0\\
     0&0&x_{33}
 \end{pmatrix}$$  
 and $g$ is the Euclidean inner product. We want to compare its the resulting Jacobian matrix of $\alpha_{i}$ with its adjacency matrix.

The matrix $T$ can be written in the form: 

\begin{comment}
    $T=x_{1}x^{\top}_{1}+x_{2}x^{\top}_{2}+x_{2}x^{\top}_{2}=
 =\begin{pmatrix}
     y_{11}\\
     y_{12}\\
     y_{13}
 \end{pmatrix}$
 
 $\begin{pmatrix}
   y_{11}&y_{12}} +\begin{&y_{13}  
 \end{pmatrixpmatrix}
     y_{21}\\
     y_{22}\\
     y_{23}
 \end{pmatrix}$
  
 $\begin{pmatrix}
   y_{21}&y_{22}&y_{23}  
 \end{pmatrix}+\begin{pmatrix}
     y_{31}\\
     y_{32}\\
     y_{33}
 \end{pmatrix}$
 
 $\begin{pmatrix}
   y_{31}&y_{32}&y_{33}  
 \end{pmatrix}$
\end{comment}

\begin{align*}
    T &= x_{1}x^{\top}_{1}+x_{2}x^{\top}_{2}+x_{3}x^{\top}_{3}=\\
    \hfill &= \begin{pmatrix}
     y_{11}\\
     y_{12}\\
     y_{13}
 \end{pmatrix} \cdot \begin{pmatrix}
   y_{11}&y_{12}&y_{13}  
 \end{pmatrix} + \begin{pmatrix}
     y_{21}\\
     y_{22}\\
     y_{23}
 \end{pmatrix} \cdot \begin{pmatrix}
   y_{21}&y_{22}&y_{23}  
 \end{pmatrix}+\begin{pmatrix}
     y_{31}\\
     y_{32}\\
     y_{33}
 \end{pmatrix} \cdot \begin{pmatrix}
   y_{31}&y_{32}&y_{33}  
 \end{pmatrix}
\end{align*}

 Let $p$ be a $3\times 3$ matrix obtained by aligning $x_{i}$ as the $i$-th row vector, $x_{i}=
 \begin{pmatrix}
  y_{i1}&
  y_{i2}&
  y_{i3}
 \end{pmatrix}^{\top}$,
 
 $p=\begin{pmatrix}
     y_{11}&y_{12}&y_{13}\\
     y_{21}&y_{22}&y_{23}\\
     y_{31}&y_{32}&y_{33}
 \end{pmatrix}$. Then, we have 
 $$\begin{cases}
     p(1)p(2)=0\\
     p(1)p(3)=0\\
     p(2)p(3)=0
 \end{cases}$$
 where $p(j)$ is the $j$-th column of the matrix $p$.

 The $g$-measurement map of $G$ is defined $f_{g,G}(p)=(p(1)p(2),p(1)p(3),p(2)p(3))$. The map $g$ is the sum of 3 copies of $h$, hence $f_{h,G}(y_{11},y_{12},y_{13})=(y_{11}y_{12},y_{11}y_{13},y_{12}y_{13})$.
\begin{center}

 $\jac f_{h,G}(p)=\begin{pmatrix}
     y_{12}&y_{11}&0\\
     y_{13}&0&y_{11}\\
     0&y_{13}&y_{12}
 \end{pmatrix}$
 \end{center}
 \begin{center}

 $\begin{pmatrix}
  v_{1}&v_{2}&v_{3}   
 \end{pmatrix}$
 $\begin{pmatrix}
     y_{12}&y_{11}&0\\
     y_{13}&0&y_{11}\\
     0&y_{13}&y_{12}
 \end{pmatrix}$=0
 \end{center}

 $$\begin{cases}
  v_{1}y_{12}+v_{2}y_{13}=0\\
  v_{1}y_{11}+v_{3}y_{13}=0\\
  v_{2}y_{11}+v_{3}y_{12}=0
 \end{cases}$$

 The resulting Jacobian matrix of $\alpha_{i}$ equals
 $\begin{pmatrix}
   0&v_{1}&v_{2}\\
   v_{1}&0&v_{3}\\
   v_{2}&v_{3}&0
 \end{pmatrix}$. On the other hand, if we weight edges $\{\{12\},\{13\},\{23\}\}$ with vector $v=(v_{1},v_{2},v_{3})$, then its adjacency matrix equals $\begin{pmatrix}
   0&v_{1}&v_{2}\\
   v_{1}&0&v_{3}\\
   v_{2}&v_{3}&0
 \end{pmatrix}$ which corresponds with the Jacobian matrix.  
\end{example}

\begin{example}
 Consider the symmetric tensor completion problem of symmetric rank one and order three, that is $d=1$, $k=3$ and $g=h_{prod}$. $G$ is 3-uniform hyper-framework: $G=\{\{a,b,c\}, \{aaa,aab,abc\}\}$ and $p:a\longmapsto x_{a}, b\longmapsto x_{b}$ and $c\longmapsto x_{c}$. We want to analyze its the resulting Jacobian matrix of $\alpha_{i}$ and its adjacency matrix.

 The map $g$ is given by $g:\mathbb{C}^3\longrightarrow \mathbb{C}$ such that it sends the tuple $(x_{a},x_{b},x_{c})$ to the tuple $(x^3_{a},x^2_{a}x_{b},x_{a}x_{b}x_{c})$. The map $g$ equals to the map $h$, hence 
 \begin{center}

 $\jac f_{h,G}=\begin{pmatrix}
   3x^2_{a}&0&0\\
   2x_{a}x_{b}&x^2_{a}&0\\
   x_{b}x_{c}&x_{a}x_{c}&x_{a}x_{b}
 \end{pmatrix}$

$\begin{pmatrix}
  v_{1}&v_{2}&v_{3}  
\end{pmatrix}
\begin{pmatrix}
 3x^2_{a}&0&0\\
   2x_{a}x_{b}&x^2_{a}&0\\
   x_{b}x_{c}&x_{a}x_{c}&x_{a}x_{b}   
\end{pmatrix}=0$

 $\begin{cases}
  3x^2_{a}v_{1}+2x_{a}x_{b}v_{2}+x_{b}x_{c}v_{3}=0\\
  x^2_{a}v_{2}+x_{a}x_{c}v_{3}=0\\
  x_{a}x_{b}v_{3}=0
 \end{cases}$
\end{center}

The Jacobian matrix of $\alpha_{i}$ polynomial can be written as the multiplication of the following two matrices:
\begin{center}
$\begin{pmatrix}
 6v_{1}x_{a}+2v_{2}x_{b}&2v_{2}x_{c}+v_{3}x_{c}&v_{3}x_{b}\\
 2v_{2}x_{a}+v_{3}x_{c}&0&v_{3}x_{c}\\
 v_{3}x_{b}&v_{3}x_{a}&0
\end{pmatrix}
=\begin{pmatrix}
3v_{1}&2v_{2}&0&v_{3}\\
v_{2}&0&v_{3}&0\\
0&v_{3}&0&0
\end{pmatrix}
\begin{pmatrix}
 2x_{a}&0&0\\
 x_{b}&x_{c}&0\\
 x_{c}&0&x_{c}\\
 0&x_{c}&x_{b}
\end{pmatrix}$
\end{center}
Interesting point is that the first matrix of the RHS, 
$\begin{pmatrix}
3v_{1}&2v_{2}&0&v_{3}\\
v_{2}&0&v_{3}&0\\
0&v_{3}&0&0
\end{pmatrix}$ is the adjacency matrix of the hypergraph.

\end{example}

\begin{example}\label{exam:Emiliano}
 Let $G=(V,E)$ be a 3-uniform hyper-graph with $V=\{a,b,c,d\}$ and $E=\{aaa,aab,abc,bcd\}$. Let the symmetric rank equals 2, that is $d=2$ and $p$ be a point-configuration sends $a\longmapsto (x_{a_{1}},x_{a_{2}})$,
 $b\longmapsto (x_{b_{1}},x_{b_{2}})$,
 $c\longmapsto (x_{c_{1}},x_{c_{2}})$ and
 $d\longmapsto (x_{d_{1}},x_{d_{2}})$.  We want to analyze the relation between its the resulting Jacobian matrix of $\alpha_{i}$ and its adjacency matrix.

 The $g$-measurement map of $G$ is defined: $f_{g,G}(p)=(x^3_{a_{1}}+x^3_{a_{2}},x^2_{a_{1}}x_{b_{1}}+x^2_{a_{2}}x_{b_{2}},x_{a_{1}}x_{b_{1}}x_{c_{1}}+
 +x_{a_{2}}x_{b_{2}}x_{c_{2}},x_{b_{1}}x_{c_{1}}x_{d_{1}}+x_{b_{2}}x_{c_{2}}x_{d_{2}})$. In this example, $g$ is the sum of two copies of $h$ and hence, $h$-measurement map of $G$ is defined: $f_{h,G}(p)=(x^3_{a_{1}},x^2_{a_{1}}x_{b_{1}},x_{a_{1}}x_{b_{1}}x_{c_{1}},x_{b_{1}}x_{c_{1}}x_{d_{1}})$.
\begin{center}
 $\jac f_{h,G}=\begin{pmatrix}
  3x^2_{a_{1}}&0&0&0\\
  2x_{a_{1}}x_{b_{1}}&x^2_{a_{1}}&0&0\\
  x_{b_{1}}x_{c_{1}}&x_{a_{1}}x_{c_{1}}&x_{a_{1}}x_{b_{1}}&0\\
  0&x_{c_{1}}x_{d_{1}}&x_{b_{1}}x_{d_{1}}&x_{b_{1}}x_{c_{1}}
 \end{pmatrix}$
 \end{center}
 Let find the left kernel of the Jacobian matrix and corresponding Jacobian matrix of $\alpha_{i}$ polynomials.
 \begin{center}
 $\begin{pmatrix}
   v_{1}&v_{2}&v_{3}&v_{4}  
 \end{pmatrix}$
 $\begin{pmatrix}
  3x^2_{a_{1}}&0&0&0\\
  2x_{a_{1}}x_{b_{1}}&x^2_{a_{1}}&0&0\\
  x_{b_{1}}x_{c_{1}}&x_{a_{1}}x_{c_{1}}&x_{a_{1}}x_{b_{1}}&0\\
  0&x_{c_{1}}x_{d_{1}}&x_{b_{1}}x_{d_{1}}&x_{b_{1}}x_{c_{1}}
 \end{pmatrix}$=0
 
$\begin{cases}
 3v_{1}x^2_{a_{1}}+2v_{2}x_{a_{1}}x_{b_{1}}+v_{3}x_{b_{1}}x_{c_{1}}=0\\
 v_{2}x^2_{a_{1}}+v_{3}x_{a_{1}}x_{c_{1}}+v_{4}x_{c_{1}}x_{d_{1}}=0\\
 v_{3}x_{a_{1}}x_{b_{1}}+v_{4}x_{b_{1}}x_{d_{1}}=0\\
 v_{4}x_{b_{1}}x_{c_{1}}=0
\end{cases}$

\begin{multline*}
\begin{pmatrix}
 6v_{1}x_{a_{1}}+2v_{2}x_{b_{1}}&2v_{2}x_{a_{1}}+v_{3}x_{c_{1}}&v_{3}x_{b_{1}}&0\\
 2v_{2}x_{a_{1}}+v_{3}x_{c_{1}}&0&v_{3}x_{a_{1}}+v_{4}x_{d_{1}}&v_{4}x_{c_{1}}\\
 v_{3}x_{b_{1}}&v_{3}x_{a_{1}}+v_{4}x_{d_{1}}&0&v_{4}x_{b_{1}}\\
 0&v_{4}x_{c_{1}}&v_{4}x_{b_{1}}&0
\end{pmatrix}= \\
= \begin{pmatrix}
 3v_{1}&2v_{2}&0&v_{3}&0&0\\
 v_{2}&0&v_{3}&0&0&v_{4}\\
 0&v_{3}&0&0&v_{4}&0\\
 0&0&0&v_{4}&0&0
\end{pmatrix}
\begin{pmatrix}
 2x_{a_{1}}&0&0&0\\
 x_{b_{1}}&x_{a_{1}}&0&0\\
 x_{c_{1}}&0&x_{a_{1}}&0\\
 0&x_{c_{1}}&x_{b_{1}}&0\\
 0&x_{d_{1}}&0&x_{b_{1}}\\
 0&0&x_{d_{1}}&x_{c_{1}}
\end{pmatrix}\end{multline*}

\begin{comment}
$$\begin{pmatrix}
 6v_{1}x_{a_{1}}+2v_{2}x_{b_{1}}&2v_{2}x_{a_{1}}+v_{3}x_{c_{1}}&v_{3}x_{b_{1}}&0\\
 2v_{2}x_{a_{1}}+v_{3}x_{c_{1}}&0&v_{3}x_{a_{1}}+v_{4}x_{d_{1}}&v_{4}x_{c_{1}}\\
 v_{3}x_{b_{1}}&v_{3}x_{a_{1}}+v_{4}x_{d_{1}}&0&v_{4}x_{b_{1}}\\
 0&v_{4}x_{c_{1}}&v_{4}x_{b_{1}}&0
\end{pmatrix}=
\begin{pmatrix}
 3v_{1}&2v_{2}&0&v_{3}&0&0\\
 v_{2}&0&v_{3}&0&0&v_{4}\\
 0&v_{3}&0&0&v_{4}&0\\
 0&0&0&v_{4}&0&0
\end{pmatrix}
\begin{pmatrix}
 2x_{a_{1}}&0&0&0\\
 x_{b_{1}}&x_{a_{1}}&0&0\\
 x_{c_{1}}&0&x_{a_{1}}&0\\
 0&x_{c_{1}}&x_{b_{1}}&0\\
 0&x_{d_{1}}&0&x_{b_{1}}\\
 0&0&x_{d_{1}}&x_{c_{1}}
\end{pmatrix}$$
\end{comment}
 \end{center}
 The first matrix of RHS corresponds to the adjacency matrix of the hyper-graph $G$ weighted by the vector $(v_{1},v_{2},v_{3},v_{4})$. The second matrix has the full rank, hence finding the rank of the resulting Jacobian matrix of polynomials $\alpha_{i}$ can be reduced finding the rank of adjacency matrix of $G$.
\end{example}

\begin{theorem}
 Let $V$ be a non-degenerate affine variety in $\mathbb{C}^m$. Suppose that $(t+1)\dim V\le m$ and $V$ is $(t+1)$-non-defective and 1-tangentially weakly non-defective. Then $V$ is $t$-identifiable.
\end{theorem}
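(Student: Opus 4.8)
The plan is to prove the stated implication by an induction-style argument on $t$, reducing $t$-identifiability to the hypothesis of $1$-tangential weak non-defectivity together with the two non-degeneracy/non-defectivity conditions. First I would recall the geometric meaning of $t$-identifiability: a generic $y\in\Sect_{t}(V)$ has a unique (up to permutation and scaling of summands) decomposition $y=\sum_{i=1}^t y_i$ with $y_i\in V$. Equivalently, one fixes generic points $x_1,\dots,x_t\in V$, sets $y=\sum x_i$, and must show that any other decomposition of $y$ consists of the same points. The natural tool is the $t$-tangential contact locus $C$ at $x_1,\dots,x_t$, namely the closure of $\{z\in V: T_zV\subseteq\langle T_{x_1}V,\dots,T_{x_t}V\rangle\}$, since in light of Theorem~4.6 (Chiantini--Ottaviani) it suffices to show that $C$ is one-dimensional, i.e.\ that $V$ is $t$-tangentially weakly non-defective.

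The key steps, in order, would be: (i) show $C$ contains each $x_i$ and is at least one-dimensional (it contains the curve through $x_i$ obtained by scaling, or more precisely the cone component through $x_i$); this is where $1$-tangential weak non-defectivity enters, guaranteeing that the contact locus at a single generic point is exactly one-dimensional and hence controlling the "local" behaviour of $C$ near each $x_i$. (ii) Bound the dimension of $C$ from above using the $(t+1)$-non-defectivity hypothesis: if $C$ had dimension $\ge 2$, one could pick a generic extra point $z\in C$ together with $x_1,\dots,x_t$ and the span $\langle T_{x_1}V,\dots,T_{x_t}V,T_zV\rangle = \langle T_{x_1}V,\dots,T_{x_t}V\rangle$ would fail to grow, contradicting that $\Sect_{t+1}(V)$ has the expected dimension $(t+1)\dim V\le m$ — here the Terracini-type identification of $T_{y}\Sect_{t+1}(V)$ with $\langle T_{x_1}V,\dots,T_{x_{t+1}}V\rangle$ is used. (iii) Conclude $\dim C=1$, hence $V$ is $t$-tangentially weakly non-defective, hence by Theorem~4.6, $V$ is $t$-identifiable. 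The hypothesis $(t+1)\dim V\le m$ is exactly what makes the expected dimension of $\Sect_{t+1}(V)$ equal $(t+1)\dim V$, so that "$(t+1)$-non-defective" is a meaningful and usable statement.

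The main obstacle I expect is step (ii): making precise the claim that a contact locus of dimension $\ge 2$ forces a drop in the dimension of $\Sect_{t+1}(V)$. One has to argue that if $C$ has a component of dimension $\ge 2$ through (or near) some $x_i$, then moving a point along $C$ does not enlarge the linear span $\langle T_{x_1}V,\dots,T_{x_t}V\rangle$, and then to feed this into Terracini's lemma for $\Sect_{t+1}$. The subtlety is that one must handle the scaling directions (the cone structure of $V$ if it is a cone over a projective variety, which is implicit in "secant" and "scaling of $y_i$") so that "one-dimensional" is interpreted correctly — the genuinely new directions in $C$ beyond scaling are what must be excluded. A clean way around this is to pass to the projectivization, where $1$-tangential weak non-defectivity says the projective contact locus at one point is zero-dimensional (a single point), and $(t+1)$-non-defectivity plus $(t+1)\dim V\le m$ give the inductive control; then lift back. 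I would also double-check that Theorem~4.6 is being applied to a variety satisfying its hypotheses, i.e.\ that "non-degenerate" here supplies whatever irreducibility/spanning assumption Chiantini--Ottaviani require.
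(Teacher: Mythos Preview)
The paper does not supply a proof of this theorem at all: like Theorem~4.6 (attributed to Chiantini--Ottaviani), Theorem~4.14 is quoted from the literature as a black box and then used to derive Corollary~4.15. So there is no ``paper's own proof'' to compare your proposal against.

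That said, your overall strategy --- reduce to showing that $V$ is $t$-tangentially weakly non-defective and then invoke Theorem~4.6 --- is the natural one and is indeed the shape of the arguments in the identifiability literature. The gap is in step~(ii). You pick $z\in C$, observe that $T_zV\subseteq\langle T_{x_1}V,\dots,T_{x_t}V\rangle$ so the span does not grow, and claim this contradicts $(t+1)$-non-defectivity via Terracini. But Terracini's lemma computes $\dim\Sect_{t+1}(V)$ as $\dim\langle T_{x_1}V,\dots,T_{x_{t+1}}V\rangle$ only for a \emph{generic} $(t+1)$-tuple, and your $z$ is by construction constrained to the closed proper subvariety $C$, hence not generic in $V$. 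What $(t+1)$-non-defectivity actually buys you directly is only that a generic $x_{t+1}$ does enlarge the span, i.e.\ that $C\subsetneq V$; it does not by itself force $\dim C\le 1$. Bridging from ``$C$ is proper'' to ``$C$ is one-dimensional'' is exactly where the real work lies, and it is where the $1$-tangential weak non-defectiveness hypothesis has to be used in a more structural way (controlling how contact loci behave as one of the base points degenerates, or via an induction on $t$ through the chain of secant varieties). Your sketch gestures at this in step~(i) but does not connect it to the bound in step~(ii). If you want to turn this into a proof, that linkage is the piece to work out; the relevant arguments in the literature (e.g.\ the Chiantini--Ciliberto theory of weakly defective varieties and its refinements) are genuinely more delicate than a single application of Terracini.
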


\begin{corollary}
 For a homogeneous $h:(\mathbb{C}^s)^k\longrightarrow\mathbb{C}$ and a positive integer $d$, let $dh$ be the sum of $d$ copies of $h$, and suppose $d_{\Gamma_{(d+1)h}}=(d+1)d_{\Gamma_{h}}$. Then, for a positive integer $d$, $G$ is globally $dh$-rigid if
 \begin{itemize}
     \item $G$ is locally $(d+1)h$-rigid,
     \item $\overline{\im f_{h,G}}$ is 1-tangentially weakly non-defective, and
     \item $G$ is globally $h$-rigid.
 \end{itemize}
\end{corollary}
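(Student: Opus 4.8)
The plan is to combine the earlier global-rigidity machinery with the new identifiability criterion, Theorem~4.10. The structure should mirror the proof of Corollary~4.7, the only difference being that $t$-identifiability of $\overline{\im f_{h,G}}$ is now established via Theorem~4.10 (from non-degeneracy, $(d+1)$-non-defectivity, and $1$-tangential weak non-defectivity with $t=d$) rather than via Chiantini--Ottaviani. First I would fix a generic point-configuration $p\in(\mathbb{C}^{d s})^V$ for the map $dh$ and, exactly as in the proof of Proposition~4.4, decompose $p$ into $(q_1,\dots,q_d)$ with each $q_i\in(\mathbb{C}^s)^V$ generic and $f_{dh,G}(p)=\sum_{i=1}^d f_{h,G}(q_i)$; global $h$-rigidity then makes each $(G,q_i)$ globally $h$-rigid, so the only missing ingredient to conclude global $dh$-rigidity of $(G,p)$ is $d$-identifiability of $V:=\overline{\im f_{h,G}}$.

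To feed Theorem~4.10 with $t=d$ I need three things about $V\subseteq\mathbb{C}^m$, where $m=|E|$: non-degeneracy, $(d+1)$-non-defectivity, and $1$-tangential weak non-defectivity. The last is a hypothesis of the corollary. For $(d+1)$-non-defectivity I would argue that local $(d+1)h$-rigidity of $G$, via Proposition~3.14 applied to the map $(d+1)h$ (which is the sum of $d+1$ copies of $h$ on $(\mathbb{C}^{(d+1)s})^V$), forces $\jac f_{(d+1)h,G}(p')$ to have rank $(d+1)s\,|V|-d_{\Gamma_{(d+1)h}}$ at a generic $p'$; using the hypothesis $d_{\Gamma_{(d+1)h}}=(d+1)d_{\Gamma_h}$ and the fact that $\dim\overline{\im f_{h,G}}=s|V|-d_{\Gamma_h}$ (the generic rank of $\jac f_{h,G}$, which is exactly $\dim V$ since $h$ is homogeneous so $\Gamma_h$ accounts for all trivial motions), one gets $\dim\Sect_{d+1}(V)=(d+1)\dim V$, i.e. $V$ is not $(d+1)$-defective. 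Here I should also note the inequality $(d+1)\dim V\le m$ is automatic once $\Sect_{d+1}(V)$ attains the expected dimension inside $\mathbb{C}^m$. Non-degeneracy of $V$ should follow because $h$ is a non-zero homogeneous form and $G$ is assumed globally $h$-rigid, hence in particular $\overline{\im f_{h,G}}$ is not contained in a proper linear subspace of $\mathbb{C}^E$ (otherwise the $h$-measurement map could not separate generic configurations up to $\Gamma_h$); if a cleaner argument is needed I would instead include non-degeneracy as an implicit standing assumption, as is customary for these measurement-variety statements.

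With non-degeneracy, $(d+1)$-non-defectivity, and $1$-tangential weak non-defectivity in hand, Theorem~4.10 gives that $V=\overline{\im f_{h,G}}$ is $d$-identifiable. Then Proposition~4.4 (with $t=d$) applies verbatim: global $dh$-rigidity of $G$ is equivalent to global $h$-rigidity of $G$, and the latter is the third hypothesis. This completes the proof. The main obstacle I anticipate is the bookkeeping in the second paragraph --- relating the generic rank of $\jac f_{(d+1)h,G}$ to $\dim\Sect_{d+1}(V)$ and pinning down $\dim V = s|V| - d_{\Gamma_h}$ --- since this is where the hypothesis $d_{\Gamma_{(d+1)h}}=(d+1)d_{\Gamma_h}$ is used and where one must be careful that the trivial-motion space for $(d+1)h$ really does split as a direct sum of the $d+1$ blocks' trivial motions; the homogeneity of $h$ and the dimension count $n_{\Gamma_h}\le|V|$ are what make this work, and I would lean on Propositions~3.13 and~3.14 to make it rigorous.
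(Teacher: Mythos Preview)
Your approach is correct and matches the paper's, which records the proof as simply ``a combination of Proposition~4.4 and Theorem~4.14'' (the identifiability criterion you call Theorem~4.10). You have correctly unpacked the one step the paper leaves implicit---that local $(d+1)h$-rigidity together with the hypothesis $d_{\Gamma_{(d+1)h}}=(d+1)d_{\Gamma_h}$ yields $(d+1)$-non-defectivity of $\overline{\im f_{h,G}}$ via the rank formula in Proposition~3.21---and your handling of non-degeneracy is no less rigorous than the paper's, which does not comment on it.
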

\begin{proof}
 A combination of Proposition 4.4 and Theorem 4.14.   
\end{proof}

Let define adjacency matrix of an edge weighted hypergraph. Let $G=(V,E)$ be a $k$-uniform hypergraph and $w\in \mathbb{F}^E$ is a vector representing the weight of each edge. Consider the collection $E^{(k-1)}:=\{ \sigma \in \binom{V}{k-1}:\sigma\subseteq e\in E\}$ of multisets of size $(k-1)$ contained in some hyperedge in $G$. The adjacency matrix $A_{G,w}$ is a $\mathbb{F}$-matrix of size $|V|\times |E^{(k-1)}|$ such that:
$$A_{G,w}[v,\sigma]=\begin{cases}
   m_{e}(v)w(e) &\text{(if $e=\sigma+v$ is in $G$)} \\
   0 &\text{(otherwise).} \end{cases}$$
where each row is indexed by a vertex $v\in V$ and each column is indexed by $\sigma\in E^{(k-1)}$.
Similarly, in the case of anti-symmetric functions, we consider the signed variant of the adjacency matrix:
$$A^{s}_{G,w}[v,\sigma] =\begin{cases}
   \sign (e,v)m_{e}(v)w(e) &\text{(if $e=\sigma+v$ is in $G$)} \\
   0 &\text{(otherwise).}
 \end{cases} $$
for each $v\in V$ and $\sigma \in E^{(k-1)}$, where $\sign (e,v)$ denotes the standard sign function of permutations, which is positive (resp. negative) if the ordering of $v$ in $e$ is odd (resp.even).

\begin{theorem}
    Suppose $g \colon (\FF^d)^k \to \FF$ is a multilinear $k-$form and $(G,p)$ is a generic locally $g$-rigid $k$-uniform hyperframework. Suppose that $f$ is symmetric and $\dim \left(\bigcap_{\omega \in \ker \jac f_{G,g}(p)^{\top}} \ker A_{G,\omega}\right) = d$. Then for each $q \in f^{-1}_{g,G}(f_{g,G}(p))$ there exists $T v\in \FF^{d \times d}$ such that 
    \[\nabla g(q(\sigma)) = T \nabla g (p(f)) \qquad \hbox{ for all $\sigma \in E^{(k-1)}$}\]
\end{theorem}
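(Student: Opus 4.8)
The plan is to exploit the factorization of the Jacobian $\jac f_{g,G}(p)$ through the adjacency matrix that the worked examples (Examples \ref{exam:Emiliano} and the ones preceding it) all exhibit. Since $g$ is a multilinear $k$-form and $f$ is symmetric, for a $k$-uniform hyperedge $e = \{v_1,\dots,v_k\}$ the entry $g(p(v_1),\dots,p(v_k))$ is linear in each $p(v_i)$, so its gradient with respect to the coordinates of a single vertex $v\in e$ is obtained by "deleting one copy of $v$" and contracting the remaining arguments; this is precisely what the matrix appearing on the right of the factorizations encodes. Concretely, I would first write, for any configuration $q$ and any $\sigma\in E^{(k-1)}$, the vector $\nabla g(q(\sigma))\in\FF^d$ obtained by plugging $q(\sigma)$ into $k-1$ of the slots of $g$ and leaving one slot free; then establish the identity
\[
\jac f_{g,G}(q) \;=\; \Phi(q)\cdot M(q),
\]
where $\Phi(q)$ is built from the scalars $m_e(v)$ (the combinatorial incidence data, i.e.\ the adjacency pattern) and $M(q)$ is the block matrix whose rows are the vectors $\nabla g(q(\sigma))$ indexed by $\sigma\in E^{(k-1)}$. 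The key point is that $\Phi(q)$ does not depend on $q$ in the relevant way — its nonzero pattern is exactly the transpose of the (unsigned) adjacency matrix $A_{G,w}$ with weights $w=\omega$, once we contract against a covector $\omega\in\ker\jac f_{g,G}(p)^{\top}$.

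Next I would bring in the hypothesis that $(G,p)$ is locally $g$-rigid and $p$ generic: by Proposition 3.23 (the rank characterization), $\rank\jac f_{g,G}(p) = d|V| - d_{\Gamma_g}$, and this rank is attained at every $q\in f^{-1}_{g,G}(f_{g,G}(p))$ that is generic in its fiber, since the fiber of a dominant polynomial map over a generic image point is equidimensional of the expected dimension. For such $q$, both $\jac f_{g,G}(p)$ and $\jac f_{g,G}(q)$ have the same (maximal) rank, and $\ker\jac f_{g,G}(q)^{\top} = \ker\jac f_{g,G}(p)^{\top}$ because the image variety $\overline{\im f_{g,G}}$ is fixed and $f_{g,G}(q) = f_{g,G}(p)$ lies on it; hence any $\omega$ in this common left kernel annihilates $\jac f_{g,G}(q)$ as well. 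Feeding $\omega$ into the factorization for $q$ gives $\omega^{\top}\Phi(q) M(q) = 0$, i.e.\ the rows of $M(q)$ corresponding to $\sigma\in E^{(k-1)}$, weighted by the adjacency matrix $A_{G,\omega}$ evaluated at $q$, sum to zero in $\FF^d$. Doing this for every $\omega$ in the left kernel says that the $|E^{(k-1)}|\times d$ matrix $M(q)$ has its row space annihilated, after the adjacency pairing, by the full left-kernel space; combined with the same statement for $p$, the hypothesis $\dim\bigl(\bigcap_{\omega}\ker A_{G,\omega}\bigr) = d$ forces the linear map sending $\nabla g(p(\sigma))\mapsto \nabla g(q(\sigma))$ to be well-defined. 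Precisely, the span of the vectors $\{\nabla g(p(\sigma))\}_\sigma$ has dimension exactly $d$ (this is where non-degeneracy of $g$ and genericity of $p$ enter — it is the cokernel bound dual to the intersection condition), so there is a unique $T\in\FF^{d\times d}$ with $\nabla g(q(\sigma)) = T\,\nabla g(p(\sigma))$ for all $\sigma$, which is the claim.

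I would organize the write-up as: (i) the factorization lemma $\jac f_{g,G}(q)=\Phi(q)M(q)$ with $\Phi$ encoding adjacency and $M$ encoding gradients, proved by direct differentiation using multilinearity; (ii) the fiber/rank stability argument identifying the left kernels at $p$ and at $q$; (iii) the linear-algebra step turning the common kernel condition plus $\dim\bigcap_\omega\ker A_{G,\omega}=d$ into the existence of $T$. The main obstacle I anticipate is step (iii): one must show that the vectors $\nabla g(p(\sigma))$ span a $d$-dimensional space (otherwise $T$ is underdetermined and the equation $\nabla g(q(\sigma))=T\nabla g(p(\sigma))$ would not pin down a map on all of $\FF^d$, and more seriously might not be consistent). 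This should follow from genericity of $p$ together with the fact that $g$ is a \emph{non-zero} multilinear form, via the observation that $\bigcap_{\omega}\ker A_{G,\omega}$ and the span of the gradients are, through the factorization, in a dual/orthogonal relationship controlled by the constant combinatorial matrix — so the stated dimension hypothesis is exactly calibrated to make both sides equal $d$. Making that duality precise, rather than merely plausible, is the crux; a clean way is to observe that $A_{G,\omega}$ is itself the Jacobian (in $q$) of the polynomial system $\alpha$ cutting out the tangential contact locus, as spelled out in Remark 4.11, so the kernel intersection condition is literally the statement that this contact locus is, infinitesimally at $p$, exactly a $d$-dimensional scaling orbit — and that orbit is traced out by $q\mapsto T$.
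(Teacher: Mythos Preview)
The paper states this theorem without proof (it is lifted from the Cruickshank--Mohammadi--Nixon--Tanigawa reference and no argument is reproduced here), so there is no in-paper proof to compare against. Judged on its own merits, your outline is essentially the right one: the identity $\omega^{\top}\jac f_{g,G}(q)\;=\;A_{G,\omega}\,M(q)$, with $M(q)$ the $|E^{(k-1)}|\times d$ matrix whose $\sigma$-row is $\nabla g(q(\sigma))$, is exactly what the worked examples exhibit, and it places every column of $M(q)$ inside $\bigcap_{\omega}\ker A_{G,\omega}$. Once that intersection has dimension $d$ and the columns of $M(p)$ span it, writing $M(q)=M(p)T^{\top}$ gives the desired $T$.

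Two points to tighten. First, in step (ii) you claim $\ker\jac f_{g,G}(q)^{\top}=\ker\jac f_{g,G}(p)^{\top}$, but you only need---and can cheaply prove---the inclusion $\ker\jac f_{g,G}(p)^{\top}\subseteq\ker\jac f_{g,G}(q)^{\top}$: for generic $p$ the left side is the conormal space of $\overline{\im f_{g,G}}$ at the smooth point $f_{g,G}(p)=f_{g,G}(q)$, and $\im\jac f_{g,G}(q)$ always sits inside the tangent space there. This matters because the statement is for \emph{every} $q$ in the fibre, not just generic ones, so you cannot assume $\jac f_{g,G}(q)$ has full rank.

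Second, the gap you flag in step (iii) is real. You need the columns of $M(p)$ to span the whole $d$-dimensional intersection (equivalently, the rows $\nabla g(p(\sigma))$ to span $\FF^d$); otherwise $M(q)=M(p)T^{\top}$ need not be solvable. Your instinct that this is where genericity plus local $g$-rigidity enter is correct: if some nonzero $v$ killed every $\nabla g(p(\sigma))$, then the constant assignment $\dot p(u)=v$ would lie in $\ker\jac f_{g,G}(p)$, and you must argue this is not a trivial infinitesimal $g$-motion (pure translations do not stabilise a nonzero multilinear form), contradicting infinitesimal rigidity via Proposition~3.19. Making that step explicit, rather than appealing loosely to the contact-locus picture, is what would turn your outline into a proof.
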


%\begin{theorem}
 %Suppose $g:(\mathbb{F}^d)^k\longrightarrow\mathbb{F}$ is the determinant as a multilinear $d$-form over $\mathbb{F}^d$. Then $G=(V,E)$ is globally $g$-rigid if there exists a point-configuration $p\in (\mathbb{F}^d)^V\longrightarrow\mathbb{F}$ such that
 %\begin{itemize}
 %    \item $(G,p)$ is infinitesimally $g$-rigid, and
 %    \item $\dim (\cap_{w\in \ker Jf_{g,G}(p)} \ker A^s_{G,w})=d.$
% \end{itemize}
%\end{theorem}

\begin{proposition}
    Let $h = h_{prod} \colon \CC^k \to \CC$ be the product map, $G = (V,E)$ a hypergraph and $x_1, \dots, x_t$ some generic points in $\overline{\im f_{h,G}}$. If $t \leq |E| - \rank \jac f_{h,G}$, then any point in the intersection of $\overline{\im f_{h,G}}$ and $\sspan_{\CC} \{x_1, \dots, x_t\}$ is a scalar multiple $x_i$ for some $i = 1, \dots, t$.
\end{proposition}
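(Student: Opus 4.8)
The plan is to work with the affine cone $X:=\overline{\im f_{h,G}}\subseteq\CC^{|E|}$ throughout. I would first collect the structural facts. Because $h_{prod}$ is homogeneous of degree $k$, so is $f_{h,G}$, hence $X$ is an irreducible cone; since each coordinate of $f_{h,G}$ is a distinct degree-$k$ monomial in the entries of $p\in\CC^{V}$ and distinct monomials are linearly independent, $X$ lies in no hyperplane, i.e. $X$ is non-degenerate; and factoring $\jac f_{h,G}(p)=\operatorname{diag}(f_{h,G}(p))\cdot A\cdot\operatorname{diag}(1/p(v))$, where $A=(m_e(v))_{e\in E,\,v\in V}$, one reads off $r:=\rank\jac f_{h,G}=\rank A=\dim X$. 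So the hypothesis says exactly $t\le|E|-r=\operatorname{codim}X$. Since $X$ is non-degenerate and irreducible, $t$ generic points $x_1,\dots,x_t$ are linearly independent, so $L:=\sspan_{\CC}\{x_1,\dots,x_t\}$ has dimension $t$; as $X$ is a cone, $\bigcup_{i=1}^{t}\CC x_i\subseteq X\cap L$ for free, and the whole task is the reverse inclusion $X\cap L\subseteq\bigcup_{i=1}^{t}\CC x_i$.

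The statement is thus a ``general position'' assertion: a generic $(t-1)$-plane spanned by $t$ points of the projectivization of $X$ meets it only in those $t$ points, as long as $t\le\operatorname{codim}X$. I would organize it via the incidence variety $\mathcal{Z}=\overline{\{(x_1,\dots,x_t,z)\in X^{t}\times X:\ z\in\sspan_{\CC}\{x_1,\dots,x_t\}\}}$ with first projection $\pi\colon\mathcal{Z}\to X^{t}$, whose generic fibre is $X\cap L$. It always contains the ``trivial'' subvariety $\mathcal{Z}_0=\bigcup_{i=1}^{t}\{(x_1,\dots,x_t,z):z\in\CC x_i\}$, of dimension $tr+1$ and dominating $X^{t}$ with $1$-dimensional fibres; the Proposition is the assertion $\overline{\mathcal{Z}}=\overline{\mathcal{Z}_0}$. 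Note that the naive transversality count — ``$\sum\lambda_i x_i\in X$ is $\operatorname{codim}X\ge t$ conditions, so $\dim\mathcal{Z}\le tr$'' — is never literally valid, precisely because $\mathcal{Z}\supseteq\mathcal{Z}_0$ is already of dimension $tr+1$; so the content is genuinely that the only components of $\mathcal{Z}$ dominating $X^{t}$ are those of $\mathcal{Z}_0$, i.e. that $X\cap L$ carries no extra component beyond the $\CC x_i$.

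To control the extra components I would use that $X$ is toric. Writing a generic $x_i$ as $x_i=f_{h,G}(p_i)$ with $p_i\in\CC^{V}$ generic, any $z\in X\cap L$ is $z=\sum_i\lambda_i x_i$ with $z\in X$, and membership $z\in X$ is equivalent to the vanishing on $z$ of every binomial $y^{u}-y^{v}$ of the toric ideal of $X$ (those with $\sum_e u_e a_e=\sum_e v_e a_e$, $a_e=(m_e(v))_v$). Substituting $z=\sum_i\lambda_i x_i$ turns each such binomial into a form $B(\lambda)$ in $t$ variables, with coefficients built from the $p_i$, and $B$ vanishes at each coordinate point of $\mathbb{P}^{t-1}$ since $x_i\in X$; crucially, a degree-$2$ binomial $y_{e_1}y_{e_2}-y_{e_3}y_{e_4}$ (with $a_{e_1}+a_{e_2}=a_{e_3}+a_{e_4}$) produces a \emph{zero-diagonal} quadratic form $\sum_{i\ne j}\lambda_i\lambda_j\bigl(p_i^{a_{e_1}}p_j^{a_{e_2}}-p_i^{a_{e_3}}p_j^{a_{e_4}}\bigr)$, because the $\lambda_i^{2}$-coefficient is $p_i^{a_{e_1}+a_{e_2}}-p_i^{a_{e_3}+a_{e_4}}=0$. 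Since the common zero locus of \emph{all} zero-diagonal quadrics in $\mathbb{P}^{t-1}$ is $\{\lambda:\lambda_i\lambda_j=0\ \forall i\ne j\}$, namely the $t$ coordinate points, and $\bigcup_{i}\CC x_i$ is exactly that locus under the identification $\CC^{t}\cong L$, the Proposition follows as soon as one checks that, for generic $p_i$, the zero-diagonal quadrics arising from the (quadratic part of the) toric ideal of $X$ cut out only the coordinate points — equivalently, that they span a large enough subspace of the $\binom{t}{2}$-dimensional space of zero-diagonal quadrics. This is exactly the point at which the hypothesis $t\le|E|-r$ is used: it is what makes enough independent binomial relations available.

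I expect the genuine obstacle to be the tight case $t=|E|-r$ (and, more generally, a hypergraph $G$ whose toric ideal needs binomial generators of degree $\ge 3$): the soft dimension count is off by one, and the span statement above must be pushed through by hand, either by analyzing the higher-degree forms $B(\lambda)$ as well or by comparing with a general-position theorem for secant varieties from the literature. Said invariantly, the crux is that $t+1$ generic points of $X$ cannot be linearly dependent in any non-trivial way when $t\le\operatorname{codim}X$ — the higher-dimensional analogue of the classical fact that a general secant $\mathbb{P}^{t-1}$ of a non-degenerate variety meets it only in the $t$ points spanning it — and it is the verification of this for the toric $X$ at hand that is the real work; everything else is bookkeeping.
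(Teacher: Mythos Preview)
The paper does not supply a proof of this proposition; it is stated as a result (presumably quoted from the Cruickshank--Mohammadi--Nixon--Tanigawa reference) and no argument follows. So there is no proof in the paper to compare your proposal against.

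On the proposal itself: your structural observations are correct. The variety $X=\overline{\im f_{h,G}}$ is an irreducible non-degenerate toric cone, the factorization of $\jac f_{h,G}(p)$ through the exponent matrix $A=(m_e(v))$ gives $\dim X=\rank A=r$, and the hypothesis becomes $t\le\operatorname{codim}X$; the incidence-variety reformulation and the reduction to constraints on $\lambda\in\CC^{t}$ via binomials of the toric ideal are all sound. However, you do not actually establish the decisive step, and you say so yourself: showing that for generic $p_i$ the resulting forms $B(\lambda)$ cut out only the $t$ coordinate points of $\mathbb{P}^{t-1}$ is precisely the content of the proposition, and you leave it as ``must be pushed through by hand''. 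Two concrete gaps remain. First, restricting to degree-$2$ binomials (which are what produce your zero-diagonal quadrics) is not justified, since the toric ideal of the monomial map $p\mapsto(p^{a_e})_{e\in E}$ need not be generated in degree $2$; you note this possibility but offer no workaround. Second, even granting enough quadrics, you give no argument linking the bound $t\le|E|-r$ to the number of independent off-diagonal coefficients you need---you assert ``this is exactly the point at which the hypothesis is used'' without showing how. What you have is a well-organized outline that correctly isolates the crux, not a proof.
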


\begin{theorem}
 Let $g$ is the sum of $d$ copies of $h_{prod}:\mathbb{F}^k\longrightarrow\mathbb{F}$. Then $G=(V,E)$ is globally $g$-rigid if there exists a point-configuration $p\in (\mathbb{F}^d)^V\longrightarrow\mathbb{F}$ such that
 \begin{itemize}
     \item $(G,p)$ is infinitesimally $g$-rigid,
     \item $|E^{(k-1)}| \ge |V|+d$, and
     \item $\dim (\cap_{w\in \ker Jf_{g,G}(p)} \ker A_{G,w})=d.$
 \end{itemize}
\end{theorem}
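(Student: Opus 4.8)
The plan is to combine the local-to-global machinery developed in the previous sections. The strategy is to show that the three hypotheses imply: first, local $g$-rigidity of $(G,p)$ (this is hypothesis one verbatim); second, that $\overline{\im f_{h_{prod},G}}$ is $d$-identifiable; and third, global $h_{prod}$-rigidity of $G$. Once we have $d$-identifiability we apply Proposition 4.4 (with $h = h_{prod}$, $t = d$): since $g$ is the sum of $d$ copies of $h_{prod}$ and $\overline{\im f_{h_{prod},G}}$ is $d$-identifiable, $G$ is globally $g$-rigid if and only if it is globally $h_{prod}$-rigid. So the whole argument reduces to establishing $d$-identifiability and global $h_{prod}$-rigidity from the stated hypotheses.

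For $d$-identifiability, the key is Proposition 4.17. Write $m = |E|$. That proposition says that if $t \le m - \rank \jac f_{h_{prod},G}$, then any point in the intersection of $\overline{\im f_{h_{prod},G}}$ with the span of $t$ generic points of it is a scalar multiple of one of those points — which is precisely $t$-identifiability (after recalling the definition: the decomposition $y = \sum y_i$ is unique up to permutation and scaling). So I need $d \le m - \rank \jac f_{h_{prod},G}$. Here is where the second and third hypotheses enter. The hyperframework $(G,p)$ is infinitesimally $g$-rigid, so by Proposition 3.22 we have $\rank \jac f_{g,G}(p) = d|V| - d_{\Gamma_g}$. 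Since $g$ is the sum of $d$ copies of $h_{prod}$, a generic $p \in (\FF^d)^V$ decomposes into generic $q_1, \dots, q_d \in (\FF^1)^V = \FF^V$ with $f_{g,G}(p) = \sum_i f_{h_{prod},G}(q_i)$, and $\im \jac f_{g,G}(p) = \sspan\{\im \jac f_{h_{prod},G}(q_i)\}$; hence $\rank \jac f_{h_{prod},G} \le \rank \jac f_{g,G}(p) \le d \cdot \rank \jac f_{h_{prod},G}$. Combining the infinitesimal-rigidity equality with the condition $|E^{(k-1)}| \ge |V| + d$ should yield the numerical bound $\rank \jac f_{h_{prod},G} \le m - d$; the role of $|E^{(k-1)}|$ and the kernel-of-adjacency-matrix hypothesis is to control $d_{\Gamma_{h_{prod}}}$ and to pin down $\rank \jac f_{h_{prod},G}$ exactly via the adjacency-matrix factorization observed in the examples (the Jacobian of $f_{h_{prod},G}$ factors through $A_{G,w}$ composed with a full-rank matrix, so the rank deficiency of the Jacobian equals that of the adjacency matrix, which the third hypothesis fixes to be $d$).

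For global $h_{prod}$-rigidity, I would invoke Theorem 4.19 (the multilinear-form global rigidity theorem): $h_{prod}$ is a multilinear $k$-form and is symmetric, $(G,p)$ projects to a generic locally $h_{prod}$-rigid hyperframework $(G,q_i)$, and the hypothesis $\dim\bigl(\bigcap_{\omega \in \ker \jac f_{h_{prod},G}(q)^\top} \ker A_{G,\omega}\bigr) = d$ — which I must deduce from the stated kernel condition on $A_{G,w}$ for $w \in \ker \jac f_{g,G}(p)$ — gives that each $q \in f^{-1}_{h_{prod},G}(f_{h_{prod},G}(q_i))$ is related to $q_i$ by a linear map $T$ acting on the gradients; one then checks $T$ lies in the stabilizer $\Gamma_{h_{prod}}$ and hence $(G,q_i)$ is globally $h_{prod}$-rigid, so $G$ is globally $h_{prod}$-rigid.

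The main obstacle I expect is the bookkeeping translating the kernel condition $\dim(\cap_{w \in \ker Jf_{g,G}(p)} \ker A_{G,w}) = d$ for the summed map $g$ into the corresponding statement for $h_{prod}$ needed by Theorem 4.19, together with verifying the inequality $d \le |E| - \rank \jac f_{h_{prod},G}$ from $|E^{(k-1)}| \ge |V| + d$: this requires knowing that $\rank \jac f_{h_{prod},G}$ equals $|V| - \dim(\cap \ker A_{G,w})$ via the factorization, i.e. that the second factor in the adjacency-matrix factorization has full rank generically, which is the genuinely technical point and the place where genericity of $p$ must be used carefully. Everything else is an assembly of Propositions 3.22, 4.4, 4.17 and Theorem 4.19.
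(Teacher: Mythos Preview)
The paper itself states this theorem without proof (it is imported from the Cruickshank--Mohammadi--Nixon--Tanigawa reference), so there is no in-text argument to compare against directly. However, Theorem~4.16 and Proposition~4.17 are placed immediately before it precisely so that they assemble into a proof, and that intended assembly differs structurally from your proposal.

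Your route through Proposition~4.4 --- reducing to $d$-identifiability of $\overline{\im f_{h_{prod},G}}$ plus global $h_{prod}$-rigidity --- has a genuine gap in both branches, which you partly flag yourself. To invoke Proposition~4.17 for identifiability you would need $d \le |E| - \rank \jac f_{h_{prod},G}$, but hypothesis~(ii) bounds $|E^{(k-1)}|$, not $|E|$, and in any case Proposition~4.17 is a statement about points of the variety lying in a span, not an identifiability statement in the sense needed by Proposition~4.4. To invoke Theorem~4.16 for global $h_{prod}$-rigidity you would need the kernel-intersection to have dimension $1$ rather than $d$, and you offer no mechanism to pass from the $g$-level hypothesis~(iii) down to the $h_{prod}$-level.

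The intended argument bypasses Proposition~4.4 entirely. One applies Theorem~4.16 \emph{directly to $g$}, which is itself a symmetric multilinear $k$-form on $\FF^d$, using hypotheses~(i) and~(iii) verbatim. This yields, for each $q$ with $f_{g,G}(q)=f_{g,G}(p)$, a matrix $T\in\FF^{d\times d}$ satisfying $\nabla g(q(\sigma))=T\,\nabla g(p(\sigma))$ for every $\sigma\in E^{(k-1)}$. Since the $j$-th coordinate of $\nabla g(p(\sigma))$ equals $\prod_{v\in\sigma} p(v)_j$, the $j$-th row of the $d\times|E^{(k-1)}|$ matrix $\bigl(\nabla g(p(\sigma))\bigr)_{\sigma}$ is exactly $f_{h_{prod},G'}(p_j)$, where $G'=(V,E^{(k-1)})$ is the associated $(k{-}1)$-uniform hypergraph and $p_j\in\FF^V$ is the $j$-th coordinate slice of $p$. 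Proposition~4.17 is then applied to $G'$ (not to $G$) with $t=d$: hypothesis~(ii) gives $d\le |E^{(k-1)}|-|V|\le |E^{(k-1)}|-\rank\jac f_{h_{prod},G'}$, so each row $f_{h_{prod},G'}(q_j)$, lying in the span of the $d$ generic rows $f_{h_{prod},G'}(p_i)$, must be a scalar multiple of one of them. This forces $T$ to be a monomial matrix, and one checks that the resulting relation between $q$ and $p$ lies in $\Gamma_g$, giving global $g$-rigidity directly. The structural point you missed is that Theorem~4.16 is meant for $g$ and Proposition~4.17 for the auxiliary hypergraph on $E^{(k-1)}$; that is exactly why hypothesis~(ii) speaks of $|E^{(k-1)}|$ rather than $|E|$.
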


\begin{example}

Consider the symmetric tensor completion problem with symmetric rank one and order four: $k=4$ and $g=h_{prod}$. 4-uniform hyper-framework ($G$,$p$) is given by $G=(\{a,b\}, \{aaaa, aaab, bbbb\})$ and $p: a\rightarrow x_{a}, b\rightarrow x_{b}$. Is $G$ globally $g$-rigid?

Let find $g$-measurement map of $G$, by definition it is defined as follows:

 $f_{g,G}=(g(p(a),p(a),p(a),p(a)), g(p(a),p(a),p(a),p(b)), g(p(b),p(b),p(b),p(b)))=$

 $=(g(x_{a},x_{a},x_{a},x_{a}),g(x_{a},x_{a},x_{a},x_{b}),g(x_{b},x_{b},x_{b},x_{b})=(x^4_{a},x^3_{a}x_{b},x^4_{b})$

Then, its Jacobian matrix equals
\begin{center}

 $\jac f_{g,G}(p)=\begin{pmatrix}
                4x^3_{a} & 0 \\
                3x^2_{a}x_{b} & x^3_{a} \\
                0 & 4x^3_{b}
              \end{pmatrix}$
\end{center}
  Obviously, $\rank \jac f_{g,G}(p)=2$ which implies that $(G,p)$ is infinitesimally $g$-rigid. In the next step, let find the weighted adjacency matrice of the hypergraph.
\begin{center}
 $ \begin{pmatrix}
     A & B & C
   \end{pmatrix}$
 $ \begin{pmatrix}
                4x^3_{a} & 0 \\
                3x^2_{a}x_{b} & x^3_{a} \\
                0 & 4x^3_{b}
              \end{pmatrix}=0$

  $\begin{cases}
     4x^3_{a}A+3x^2_{a}x_{b}B=0 \\
     x^3_{a}B+4x^3_{b}C=0
   \end{cases} $
\end{center}
   W.L.O.G let take $B=1$, by solving system of equations we find that $A=-\frac{3x_{b}}{4x_{a}}$ and $C=-\frac{x^3_{a}}{4x^3_{b}}$. Hence, $\omega(aaaa)=-\frac{3x_{b}}{4x_{a}}$, $\omega(aaab)=1$ and $\omega(bbbb)=-\frac{x^3_{a}}{4x^3_{b}}$.
\begin{center}
 $A_{G,\omega}=\begin{pmatrix}
                 -\frac{3x_{b}}{x_{a}} & 3 & 0 \\
                 1 & 0 & -\frac{x^3_{a}}{x^3_{b}}
               \end{pmatrix}$
\end{center}
  Rank of this matrix equals to 1. It implies that $\dim \ker A=1$ and by applying Theorem 4.9 we conclude that $G$ is globally $g$-rigid.
\end{example}

\begin{example}
 Consider the symmetric tensor completion problem with symmetric rank one and order four: $k=4$ and $g=h_{prod}$. 4-uniform hyper-framework ($G$,$p$) is given by $G=(\{a,b\}, \{aaaa, abbb, aaab\})$ and $p: a\rightarrow x_{a}, b\rightarrow x_{b}$. Is $G$ globally $g$-rigid?

Similarly as above, let find $g$-measurement map of $G$:

 $f_{g,G_{1}}=(g(p(a),p(a),p(a),p(a)), g(p(a),p(b),p(b),p(b)), g(p(a),p(a),p(a),p(b)))=$

 $=(g(x_{a},x_{a},x_{a},x_{a}),g(x_{a},x_{b},x_{b},x_{b}),g(x_{a},x_{a},x_{a},x_{b})=(x^4_{a},x_{a}x^3_{b},x^3_{a}x_{b})$

 Then, its Jacobian matrix equals
\begin{center}
 $\jac f_{g,G}(p)=\begin{pmatrix}
                4x^3_{a} & 0 \\
                x^3_{b} & 3x_{a}x^2_{b} \\
                3x^2_{a}x_{b} & x^3_{a}
              \end{pmatrix}$
\end{center}
 It is clear that $\rank \jac f_{g,G}(p)=2$ which implies that $(G,p)$ is infinitesimally $g$-rigid. Let find the weighted adjacency matrix of the hypergraph.
\begin{center}
 $ \begin{pmatrix}
     A & B & C
   \end{pmatrix}$
 $ \begin{pmatrix}
                4x^3_{a} & 0 \\
                x^3_{b} & 3x_{a}x^2_{b} \\
                3x^2_{a}x_{b} & x^3_{a}
              \end{pmatrix}=0$

  $\begin{cases}
     4x^3_{a}A+x^3_{b}B+3x^2_{a}x_{b}C=0 \\
     3x_{a}x^2_{b}B+x^3_{a}C=0
   \end{cases} $
\end{center}
   W.L.O.G let take $B=1$, by solving system of equations we find that $A=\frac{2x^3_{b}}{x^3_{a}}$ and $C=-\frac{3x^2_{b}}{x^2_{a}}$. Hence, $\omega(aaaa)=\frac{2x^3_{b}}{x^3_{a}}, \omega(abbb)=1$ and $\omega(aaab)=-\frac{3x^2_{b}}{x^2_{a}}$.
\begin{center}
 $A_{G,\omega}=\begin{pmatrix}
                 \frac{6x^3_{b}}{x^3_{a}} & 0 & 1 & -\frac{9x^2_{b}}{x^2_{a}} \\
                 -\frac{3x^2_{b}}{x^2_{a}} & 3 & 0 & 0
               \end{pmatrix}$
\end{center}
  Rank of this matrix equals to 2. It implies that $\dim \ker A=2$ and hence we can not apply Theorem 4.9.
   
\end{example}
\section{Acknowledgements}
I would like to thank prof. Fatemeh Mohammadi and Giacomo Masiero for their supervision during my internship and also Emiliano Liwski for his contribution to Example~\ref{exam:Emiliano}.  

\section{References}

1. James Cruickshank, Fatemeh Mohammadi, Anthony Nixon and Shin-ichi Tanigawa. Identifiability of points and rigidity of hypergraphs under algebraic constraints (2023). ArXiv preprint arXiv:2305.18990

\medskip
\noindent
2. James Cruickshank, Fatemeh Mohammadi, Harshit J. Motwani, Anthony Nixon, and Shin-ichi Tanigawa. Global rigidity of line constrained frameworks (2022). arXiv preprint arXiv:2208.09308. SIAM Journal on Discrete Mathematics. 2024 Mar 31;38(1):743-63.

\medskip
\noindent 3. Fatemeh Mohammadi. Tensors in statistics and rigidity theory. arXiv preprint arXiv:2212.14752 (2022).

\end{document}